%
%
%
%
%
%
\RequirePackage{fix-cm}
\documentclass[smallextended]{svjour3}       
\smartqed  
\usepackage{graphicx}
\usepackage{amsmath,amssymb}
\usepackage{enumerate}
\usepackage{url}

\RequirePackage[numbers]{natbib}
\RequirePackage[colorlinks,citecolor=blue,linkcolor=blue,urlcolor=blue]{hyperref}
%
%

\DeclareMathOperator{\supp}{supp}

\DeclareMathOperator{\Laplas}{\mathrm{\Delta}}

\newcommand{\R}{\mathbb{R}}
\newcommand{\N}{\mathbb{N}}
\newcommand{\I}{\mathbb{I}}
\newcommand{\p}{\mathbb{P}}
\newcommand{\MF}{\mathcal{M}_F}

\newcommand{\Cf}{\mathrm{C}}
\newcommand{\eps}{\varepsilon}
\newcommand{\NM}{\mathcal{N}}
\newcommand{\id}{\mathrm{id}}
\newcommand{\F}{\mathcal{F}}

%
%
%
\begin{document}

\title{On Dean-Kawasaki Dynamics with\\ Smooth Drift Potential
}


\author{Vitalii Konarovskyi \and Tobias Lehmann \and Max von Renesse}


\institute{V. Konarovskyi, T. Lehmann, M. von Renesse \at
              Universit\"{a}t Leipzig, Fakult\"{a}t f\"{u}r Mathematik und Informatik,\\ Augustusplatz 10, 04109 Leipzig, Germany \\
              \email{konarovskyi@gmail.com, tobias.lehmann@math.uni-leipzig.de,\\ renesse@uni-leipzig.de}           
}

\date{\today}

\maketitle

\begin{abstract}
We consider the Dean-Kawasaki equation with smooth drift interaction potential and show that measure valued solutions exist only in certain parameter regimes in which case they are given by  finite Langevin particle systems with mean field interaction. 
\keywords{Dean-Kawasaki equation \and Langevin dynamics \and Wasserstein diffusion \and It\^o formula for measure-valued processes}
\subclass{60H15 \and 60K35 \and 82C22 \and 60G57 \and 82C31}
\end{abstract}

\section{Introduction and main result}

This paper is devoted to the existence, uniqueness and structure of solutions to the Dean-Kawasaki equation 

\begin{equation}\label{f_DK_equation}
d\mu_t=\frac{\alpha}{2}\Laplas\mu_tdt+\nabla\cdot\left(\mu_t\nabla\frac{\delta F(\mu_t)}{\delta\mu_t}\right)dt+\nabla\cdot\left(\sqrt{\mu_t}dW_t\right),
\end{equation}
which appears in macroscopic fluctuation theory or models for glass dynamics in non-equilibrium statistical physics~\cite{MR2095422,
RevModPhys.87.593,
doi:10.1063/1.4913746,
Dean:1996,
MR3744636,
1742-5468-2014-4-P04004,
doi:10.1063/1.4883520,
MR3813113,
MR1661764,
MR3280005,
Kawasaki199435,
PhysRevE.89.012150,
MR978701,
doi:10.1063/1.478705,
0953-8984-12-8A-356,
Rotskoff2018,
1742-5468-2016-11-113202,
Solon2015,
Spohn:1991,
MR2452196}. Here $dW$ denotes a space-time white noise vector field and  $\frac{\delta F(\mu)}{\delta\mu}$ denotes the functional derivative of $F$. \smallskip 

Extending our previous result for the non-interacting case in~\cite{Konarovskyi:DK:2018}, we show that for smooth potentials $F$ measure valued solutions to \eqref{f_DK_equation} exist only for a discrete range of parameters $\alpha$ in which case the solution is given in terms of a finite particle system. 

The precise definition of a (weak martingale) solution to \eqref{f_DK_equation}
and our main result read as follows. 
\begin{definition}
A continuous $\MF(\R^d)$-valued process $\mu_t$ is a solution to equation~\eqref{f_DK_equation}, if for each $\varphi\in\Cf^2_b(\R^d)$ the process 
$$
M_{\varphi}(t):=\langle\varphi,\mu_t\rangle-\int_0^t\left[\frac{\alpha}{2}\langle\Laplas\varphi,\mu_s\rangle+\left\langle\nabla\varphi\cdot\nabla\frac{\delta F(\mu_s)}{\delta\mu_s},\mu_s\right\rangle\right]ds,\quad t\geq 0,
$$
is a continuous martingale with respect to the filtration $\F_t:=\sigma(\mu_s,\ s\in[0,t])$, $t\geq 0$, with the quadratic variation
\begin{equation}\label{f_variation_of_M_varphi}
[M_{\varphi}]_t=\int_0^t\left\langle\left|\nabla\varphi\right|^2,\mu_s\right\rangle ds,\quad t\geq 0.
\end{equation}
\end{definition}

Let $\Cf^{2,2}_b(\MF(\R^d))$ denote the space of twice continuously differentiable functions on $\MF(\R^d)$, which are bounded on the subsets $\{\mu\in\MF(\R^d):\ \mu(\R^d)\leq C\}$, $C>0$, together with their derivatives. For the precise definition of $\Cf^{2,2}_b(\MF(\R^d))$ see Section~\ref{section_preliminaries}.

\begin{theorem}[Existence and uniqueness of solutions to the Dean-Kawasaki equation]\label{thm_existence_and_uniqueness}
Let $\nu\in\MF(\R^d)$, $b:=\nu(\R^d)\not=0$ and $F\in\Cf^{2,2}_b(\MF(\R^d))$. Then the Dean-Kawasaki equation
$$
d\mu_t=\frac{\alpha}{2}\Laplas\mu_tdt+\nabla\cdot\left(\mu_t\nabla\frac{\delta F(\mu_t)}{\delta\mu_t}\right)dt+\nabla\cdot\left(\sqrt{\mu_t}dW_t\right)
$$
has a (unique in law) solution $\mu_t$, $t\geq 0$, starting from $\nu$, i.e. $\mu_0=\nu$, if and only if $b\alpha=:n\in\N$ and $\nu=\frac{b}{n}\sum_{i=1}^n\delta_{x^i}$ for some $x^i\in\R^d$, $i\in[n]:=\{1,\ldots,n\}$. Moreover, 
$$
\mu_t=\frac{b}{n}\sum_{i=1}^n\delta_{X^i(t)},\quad t\geq 0,
$$
where $X(t)=(X^1(t),\ldots,X^n(t))$, $t\geq 0$, is a (unique) solution to the equation
$$
dX^i(t)=-\nabla\frac{\delta F(\mu_t)}{\delta\mu_t}(X^i(t))dt+\sqrt{\frac{n}{b}}dw^i(t),\quad i\in[n],
$$ 
with $X(0)=(x^1,\ldots,x^n)$, and $w^i(t)$, $t\geq 0$, $i\in[n]$, are independent standard Wiener processes on $\R^d$.
\end{theorem}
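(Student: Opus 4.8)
The plan is to treat the two implications separately, unified by the observation that, since $F\in\Cf^{2,2}_b(\MF(\R^d))$, the interaction drift $\nabla\frac{\delta F(\mu)}{\delta\mu}$ is bounded, so the drift term is merely a bounded perturbation of the driftless Dean--Kawasaki equation analysed in \cite{Konarovskyi:DK:2018}. I would remove it by a Girsanov change of measure and thereby import the rigidity and the particle representation already established there.

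\textbf{Sufficiency.} Assume $b\alpha=n\in\N$ and $\nu=\frac bn\sum_i\delta_{x^i}$. I would first solve the mean-field particle system
\[
dX^i(t)=-\nabla\tfrac{\delta F(\mu_t)}{\delta\mu_t}(X^i(t))\,dt+\sqrt{\tfrac nb}\,dw^i(t),\qquad \mu_t=\tfrac bn\sum_{j}\delta_{X^j(t)},
\]
whose drift, read as a function of $(X^1,\dots,X^n)$, is Lipschitz because $F\in\Cf^{2,2}_b(\MF(\R^d))$; standard SDE theory gives a unique strong solution. Applying It\^o's formula to $\langle\varphi,\mu_t\rangle=\frac bn\sum_i\varphi(X^i(t))$ then shows that $M_\varphi$ is a continuous martingale with quadratic variation $[M_\varphi]_t=\int_0^t\langle|\nabla\varphi|^2,\mu_s\rangle\,ds$; here the second-order It\^o term produces the contribution $\frac{n}{2b}\langle\Delta\varphi,\mu_s\rangle$, which equals the required $\frac\alpha2\langle\Delta\varphi,\mu_s\rangle$ precisely because $n/b=\alpha$. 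This simultaneously verifies that $\mu_t$ is a solution and explains the quantization $b\alpha=n$.

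\textbf{Necessity.} Conversely, let $\mu_t$ be any solution. Writing $\psi_s:=\frac{\delta F(\mu_s)}{\delta\mu_s}$, the central step is to introduce the continuous martingale $L$ determined, relative to the family $\{M_\varphi\}$, by the brackets
\[
[L,M_\varphi]_t=\int_0^t\big\langle\nabla\varphi\cdot\nabla\psi_s,\mu_s\big\rangle\,ds,\qquad [L]_t=\int_0^t\big\langle|\nabla\psi_s|^2,\mu_s\big\rangle\,ds.
\]
Since $\nabla\psi_s$ is bounded and the total mass is conserved (take $\varphi\equiv1$, so $[M_1]\equiv0$ and $\mu_t(\R^d)\equiv b$), the bracket $[L]$ is bounded, Novikov's condition holds, and $d\mathbb{Q}:=\mathcal E(-L)_T\,d\p$ defines an equivalent measure under which $M_\varphi(t)+[M_\varphi,L]_t=\langle\varphi,\mu_t\rangle-\int_0^t\frac\alpha2\langle\Delta\varphi,\mu_s\rangle\,ds$ is a martingale with the \emph{same} quadratic variation $\int_0^t\langle|\nabla\varphi|^2,\mu_s\rangle\,ds$. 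Hence under $\mathbb{Q}$ the process $\mu_t$ solves the driftless Dean--Kawasaki equation, so \cite{Konarovskyi:DK:2018} forces $b\alpha=n\in\N$, the representation $\mu_t=\frac bn\sum_i\delta_{X^i(t)}$, and $\nu$ of the stated form. As these are $\mathbb{Q}$-almost-sure statements about the path $t\mapsto\mu_t$ and $\p\sim\mathbb{Q}$, they hold $\p$-almost surely as well; since $\nu$ is deterministic, the constraints on $\nu$ and on $b\alpha$ are unconditional.

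\textbf{Uniqueness and the main obstacle.} Uniqueness in law now follows: every solution is of the particle form, its atoms are forced to solve the interacting SDE above (matched by testing with $\varphi$ supported near a single atom), and that SDE is well-posed, so the law of $\mu$ is determined. I expect the principal technical obstacle to be the \emph{rigorous} implementation of the Girsanov step at the level of the measure-valued martingale problem: realising $L$ as a genuine stochastic integral against the martingale associated with $\{M_\varphi\}$, checking that the change of measure preserves the quadratic variation \eqref{f_variation_of_M_varphi} so that the transformed problem is \emph{exactly} the driftless one, and verifying that the almost-sure structural conclusions of \cite{Konarovskyi:DK:2018} transfer across equivalent measures. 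A secondary, more routine difficulty is the bookkeeping in passing between the empirical measure and the labelled particles (continuity of labels and absence of collisions) required to extract the equation for $X$.
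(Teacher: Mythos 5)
Your overall strategy is the same as the paper's: remove the drift by an exponential change of measure, invoke the rigidity theorem of \cite{Konarovskyi:DK:2018} for the driftless equation, and transfer the particle structure back across the equivalent measure (your sufficiency direction, a direct particle construction plus It\^o's formula with the quantization $n/b=\alpha$ coming from the second-order term, is the natural counterpart of what the paper leaves implicit). However, there is a genuine gap precisely at the step you defer as ``the principal technical obstacle'': the existence of a continuous martingale $L$ with $[L,M_\varphi]_t=\int_0^t\langle\nabla\varphi\cdot\nabla\psi_s,\mu_s\rangle\,ds$ and $[L]_t=\int_0^t\langle|\nabla\psi_s|^2,\mu_s\rangle\,ds$ cannot simply be postulated. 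The martingale problem only provides the family $\{M_\varphi\}$ for \emph{deterministic, time-independent} test functions $\varphi$, whereas $\nabla\psi_s(x)=\nabla F'(\mu_s;x)$ is a random, time-dependent field; $L$ is not a finite linear combination of the $M_\varphi$'s, and prescribing brackets against a family of martingales does not by itself produce a martingale realizing them. Constructing $L$ is exactly the main technical content of the paper: $L$ is the martingale part $M^F$ in the It\^o formula (Theorem~\ref{thm_Ito_formula}), which is proved first for cylinder functionals $G(\mu)=f(\langle\varphi_1,\mu\rangle,\ldots,\langle\varphi_n,\mu\rangle)$ --- for these $M^G$ \emph{is} the combination $\sum_i\partial_{z_i}f(\langle\varphi,\mu\rangle)\,dM_{\varphi_i}$ --- and then extended to all $G\in\Cf^{2,2}_b(\MF)$ via the Bernstein-polynomial approximation scheme of the appendix (Theorem~\ref{thm_approximation}), with convergence of $G'$, $G''$ and their space derivatives uniformly on mass-bounded sets. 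Your alternative of realizing $L$ by Walsh-type stochastic integration against the martingale measure induced by $\{M_\varphi\}$ is viable in principle, but verifying that $\nabla\psi_s$ is an admissible integrand (predictability, and approximation by simple integrands $\sum_k\xi_k(s)\nabla\varphi_k(x)$ in the relevant $L^2(\mu_s\,ds)$ norm) amounts to the same functional approximation work; without it, the Girsanov step at the heart of your necessity argument cannot be carried out.

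Two secondary points. First, the theorem of \cite{Konarovskyi:DK:2018} applies to probability-measure-valued solutions, so before citing it you must rescale as the paper does: $\tilde\mu_t:=\frac1b\mu_{bt}$ solves the driftless equation with parameter $b\alpha$ and unit mass, and this rescaling is precisely where the condition $b\alpha\in\N$ (rather than $\alpha\in\N$) originates; your write-up skips this. Second, your claim that the particle drift $(x^1,\ldots,x^n)\mapsto\nabla F'(\frac bn\sum_j\delta_{x^j};x^i)$ is Lipschitz requires an argument (a chain rule relating $x^j$-derivatives of the composition to $F''$ and to space derivatives of $F'$); it is easier, and sufficient for uniqueness in law, to observe that the drift is bounded and continuous and to apply Girsanov at the particle level --- which is also how the paper identifies the atom dynamics, via the $\p$-martingales $R^i(t)=X^i(t)+[M^G,X^i]_t$ with brackets $\frac nb I\,t$, rather than by localizing test functions near a single atom (an argument that additionally needs the atoms to stay distinct).
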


 We remark that the statement above is false for completely arbitrary drift $F$, since Dean-Kawasaki models with singular drift admitting complex solutions are known e.g.~\cite{MR2537551} and   \cite{MR2606878,Konarovskyi_CM:2017,Konarovskyi_SR:2017,Konarovskyi_LDP:2015,Marx2017,Schiavo2018} both in case of $\alpha>0$ or $\alpha =0$, respectively. We also note that the regularised versions of the Dean-Kawasaki equation can admit non-trivial solutions (see, e.g.~\cite{Zimmer:2018,ZimmerII:2018,Gess:2017}).  \\

\textit{Contents of the paper.} The  proof of our main theorem is based on a reduction to the simpler case when $F=0$, which was  treated in \cite{Konarovskyi:DK:2018},  by means of a Girsanov transform which is combined with an appropriate It\^o formula for $F(\mu)$. The latter is obtained by means of an explicit approximation of smooth functionals $F$ by simple cylindrical functionals in terms of measure valued versions of Bernstein polynomials, which is given in  the appendix and which might be of independent mathematical interest.

%
%
%

%


\section{Preliminaries}\label{section_preliminaries}

Let $\Cf(K)$ be the space of continuous functions on a closed subset $K$ of $\R^d$, Usually, $K$ will be a rectangle $[a,b]^d$ or $\R^d$. The set of bounded continuous functions on $K$ is denoted by $\Cf_b(K)$. If $K$ is a compact set, then trivially $\Cf_b(K)=\Cf(K)$. For $m\in\N$ we define by  $\Cf^m(K)$ the space of $m$ times continuously differentiable functions on the interior of $K$ and which can be extended to continuous functions on $K$. We say that $f$ is smooth on $K$ if it belongs to $C^m(K)$ for all $m\geq 1$. The set of smooth functions on $K$ is denoted by $C^{\infty}(K)$. If 
$l=(l_1,\ldots,l_d)\in(\N\cup\{0\})^d=:\N_0^d$ and $l\not=(0,\ldots,0)$, then we will use the notation
$$
\frac{\partial^{|l|}f}{\partial x^l}=\frac{\partial^{l_1+\ldots+l_d}f}{\partial x_1^{l_1}\ldots\partial x_d^{l_d}}
$$
for the corresponding derivative of $f$ if it exists. We also set $f^{((0,\ldots,0))}=f$ and $\Cf^0(K):=\Cf(K)$. If $K=[a,b]^d$, then we equip $\Cf^m(K)$ with the uniform norm denoted by $\|\cdot\|_{\Cf^m(K)}$. If $K=\R^d$, the topology on $\Cf^m(K)$ is generated by the seminorms of uniform convergence on compact sets.

We will denote the set of finite measures on $K$ by $\MF(K)$ (or shortly $\MF$). For each $\varphi\in \Cf_b(K)$ we set
$$
\langle\varphi,\mu\rangle:=\int_{\R^d}\varphi(x)\mu(dx).
$$
We equip $\MF$ with the weak topology defined by 
$$
\mu_n\to\mu\ \mbox{in}\ \MF,\ n\to\infty,\quad\mbox{iff}\quad\langle\varphi,\mu_n\rangle\to\langle\varphi,\mu\rangle,\ n\to\infty,\ \forall\varphi\in\Cf_b(K).
$$
It is well known that such a  topology is metrisable and $\MF$ is a Polish space. 

\begin{lemma}\label{lemma_NM_C}
If $K$ is a compact set, then for each $C>0$ the set $\NM_C(K):=\{\mu\in\MF(K):\ \mu(K)\leq C\}$ is compact in $\MF(K)$.
\end{lemma}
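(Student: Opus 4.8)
The plan is to realise $\MF(K)$ as a subset of the topological dual $\Cf(K)^*$ and invoke the Banach--Alaoglu theorem. Since $K$ is compact, $\Cf_b(K)=\Cf(K)$ is a Banach space under the uniform norm, and by the Riesz representation theorem its dual is the space of finite signed (Radon) measures on $K$, with the dual pairing given precisely by $\langle\varphi,\mu\rangle$. Under this identification the weak topology on $\MF(K)$ defined in the excerpt coincides with the restriction of the weak-$*$ topology on $\Cf(K)^*$: both are by construction the coarsest topologies making all the evaluation maps $\mu\mapsto\langle\varphi,\mu\rangle$, $\varphi\in\Cf(K)$, continuous.

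First I would identify $\NM_C(K)$ with a subset of the closed ball of radius $C$ in $\Cf(K)^*$. For a positive measure the operator norm equals the total mass, since $\sup_{\|\varphi\|_\infty\le 1}|\langle\varphi,\mu\rangle|=\langle 1,\mu\rangle=\mu(K)$, the supremum being attained at $\varphi\equiv 1$, which is admissible as $K$ is compact. Hence $\NM_C(K)$ sits inside the closed ball $B_C:=\{L\in\Cf(K)^*:\ \|L\|\le C\}$, which is weak-$*$ compact by Banach--Alaoglu. It then remains to check that $\NM_C(K)$ is a weak-$*$ \emph{closed} subset of $B_C$, for a closed subset of a compact set is compact. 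Writing $\NM_C(K)$ as the intersection of $B_C$ with the positivity cone $\{L:\ L(\varphi)\ge 0\ \text{whenever}\ \varphi\ge 0\}$, each condition $L(\varphi)\ge 0$ cuts out a weak-$*$ closed half-space, so the cone, and hence the intersection, is weak-$*$ closed. The main (and essentially only) subtlety is the bookkeeping of identifying the abstractly positive functionals of $\Cf(K)^*$ with genuine elements of $\MF(K)$, which is again furnished by the Riesz representation theorem. Combining these observations yields weak-$*$ compactness of $\NM_C(K)$, and by the coincidence of topologies it is compact in $\MF(K)$.

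Alternatively, since the excerpt records that $\MF(K)$ is Polish and hence metrisable, one may argue by sequential compactness: any sequence in $\NM_C(K)$ is automatically tight, being supported on the compact set $K$, so Prokhorov's theorem delivers a weakly convergent subsequence, and testing the limit against the constant function $1\in\Cf(K)$ shows the limiting total mass is still at most $C$, so the limit lies in $\NM_C(K)$. I would present the Banach--Alaoglu route as primary, as it avoids invoking Prokhorov and makes the role of the compactness of $K$ entirely transparent, relegating the Prokhorov argument to a remark.
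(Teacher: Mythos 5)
The paper states Lemma~\ref{lemma_NM_C} without any proof at all---it is invoked as a standard fact---so there is no in-paper argument to measure you against; what matters is whether your argument stands on its own, and it does. The Banach--Alaoglu route is correct and complete: since $K$ is compact, $\Cf_b(K)=\Cf(K)$, so the paper's weak topology on $\MF(K)$ and the weak-$*$ topology of $\Cf(K)^*$ restrict to the same (initial) topology generated by the maps $\mu\mapsto\langle\varphi,\mu\rangle$; the norm of a positive functional is $\langle 1,\mu\rangle=\mu(K)$, placing $\NM_C(K)$ inside the weak-$*$ compact ball $B_C$; and $\NM_C(K)=B_C\cap\{L:\ L(\varphi)\geq 0\ \forall\varphi\geq 0\}$ is weak-$*$ closed because the positivity cone is an intersection of closed half-spaces, with Riesz--Markov converting abstract positive functionals back into elements of $\MF(K)$. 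Two small points deserve explicit mention if this were written out in the paper: first, the paper defines its topology through sequential convergence, so one should note (as the paper implicitly does via metrisability/Polishness) that this sequential description determines the initial topology you use; second, in the Prokhorov variant one should recall that Prokhorov's theorem applies to families of finite measures with uniformly bounded total mass, not only probability measures---your sequence has mass bounded by $C$ and may converge to the zero measure, which is harmless since $0\in\NM_C(K)$. Your instinct to make Banach--Alaoglu primary is reasonable: it uses the compactness of $K$ exactly once (to get $\Cf_b(K)=\Cf(K)$ and attain the norm at $\varphi\equiv 1$) and needs no tightness bookkeeping, whereas the Prokhorov argument is shorter to state but imports a heavier theorem whose finite-measure version is itself usually proved by a compactness argument of the first kind.
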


Let $\Cf(\MF)$ be the set of continuous functions from $\MF(K)$ to $\R$.

If $K$ is compact, we equip the space $\Cf(\MF(K))$ with the topology of uniform convergence on compact sets $\NM_C(K)$, $C>0$. Then one can prove that $\Cf(\MF(K))$ is a Polish space. 

A function $F\in\Cf(\MF(K))$ is said to be {\it differentiable} if for every $\mu\in\MF(K)$ 
$$
F'(\mu;x):=\frac{\delta F(\mu)}{\delta\mu}(x):=\frac{\partial}{\partial\eps}F(\mu+\eps\delta_x)|_{\eps=0}=\lim_{\eps\to 0+}\frac{F(\mu+\eps\delta_x)-F(\mu)}{\eps}.
$$
exists for each $x\in K$ and belongs to $\Cf(K)$. The set of functions for which $F'(\mu;x)$  is jointly continuous in $\mu$ and $x$ is denoted by $\Cf^1(\MF)$. 

Similarly, we can define the second order derivative. So, the {\it second derivative} of a function $F\in\Cf(\MF)$ is defined by
$$
F''(\mu;x,y):=\frac{\delta^2 F(\mu)}{\delta\mu^2}(x,y):=\frac{\partial^2}{\partial\eps_1\partial\eps_2}F(\mu+\eps_1\delta_x+\eps_2\delta_y)|_{\eps_1=\eps_2=0},
$$
if it exists for all $x,y\in K$ and belongs to $\Cf(K^2)$. The set of functions from $\Cf^1(\MF)$ for which $F''(\mu;x,y)$  is jointly continuous in $\mu$, $x$ and $y$ is denoted by $\Cf^2(\MF)$. The notion of differentiable functions on $\MF$ was taken from~\cite[Section~2]{MR1242575}.

We also set for $m\in\N\cup\{\infty\}$
\begin{align*}
\Cf^{1,m}(\MF)=\left\{F\in C^1(\MF):\ \begin{array}{l}
F'(\mu;\cdot)\in\Cf^m(K)\ \ \forall\mu\in\MF\\ 
\mbox{and its derivatives (w.r.t.}\ x\mbox{)}\\ 
\mbox{are jointly continuous in}\  \mu, x
\end{array}
\right\}
\end{align*}
and
$$
\Cf^{2,m}(\MF)=\left\{F\in \Cf^{1,m}(\MF)\cap\Cf^2(\MF):\ 
\begin{array}{l}
F''(\mu;\cdot)\in\Cf^m(K^2)\ \ \forall\mu\in\MF\\ 
\mbox{and its derivatives (w.r.t.}\ x,y\mbox{)}\\ 
\mbox{are jointly continuous in}\  \mu, x,y
\end{array}
\right\}
$$
Let $\Cf^{0,m}(\MF):=\Cf(\MF)$ for each $m\in\N\cup\{\infty\}$. 

We denote by $\Cf^{2,m}_b(\MF)$ the set of functions $F$ from $\Cf^{2,m}(\MF)$ such that for each $C>0$ $F$, $F'$ and $F''$ together with their derivatives up to the order $m$ are bounded on $\NM_C(\MF(K))$, $\NM_C(\MF(K))\times K$ and $\NM_C(\MF(K))\times K^2$, respectively.

\section{It\^{o} formula for the Dean-Kawasaki equation}\label{section_Ito_formula}

Let $\MF:=\MF(\R^d)$ and $F$ be a function from $\Cf^{2,2}_b(\MF)$.

In this section, we are going to establish the It\^{o} formula for a solution to the Dean-Kawasaki equation~\eqref{f_DK_equation}. We recall that a continuous $\MF$-valued process $\mu_t$ is a solution to equation~\eqref{f_DK_equation}, if for each $\varphi\in\Cf^2_b(\R^d)$ the process 
$$
M_{\varphi}(t):=\langle\varphi,\mu_t\rangle-\int_0^t\left[\frac{\alpha}{2}\langle\Laplas\varphi,\mu_s\rangle+\left\langle\nabla\varphi\cdot\nabla\frac{\delta F(\mu_s)}{\delta\mu_s},\mu_s\right\rangle\right]ds,\quad t\geq 0,
$$
is a continuous martingale with respect to the filtration $\F_t:=\sigma(\mu_s,\ s\in[0,t])$, $t\geq 0$, with the quadratic variation
\begin{equation}\label{f_variation_of_M_varphi}
[M_{\varphi}]_t=\int_0^t\left\langle\left|\nabla\varphi\right|^2,\mu_s\right\rangle ds,\quad t\geq 0.
\end{equation}

\begin{remark}\label{rem_conservation}
Taking $\varphi(x)=1$, $x\in\R^d$, it is easy to see that $\langle\varphi,\mu_t\rangle=\langle\varphi,\mu_0\rangle$ for all $t\geq 0$. In particular, $\mu_t\in\NM_C(\R^d)$ for all $t\geq 0$ if $\mu_0\in\NM_C(\R^d)$.
\end{remark}

\begin{theorem}[It\^{o} formula for the Dean-Kawasaki equation]\label{thm_Ito_formula}
For every $G\in\Cf^{2,2}_b(\MF)$ the following process
\begin{equation}\label{f_M_G}
\begin{split}
M^G(t):=G(\mu_t)-G(\mu_0)&-\int_0^t\left[\frac{\alpha}{2}\left\langle\Laplas\frac{\delta G(\mu_s)}{\delta\mu_s},\mu_s\right\rangle+\left\langle\nabla\frac{\delta G(\mu_s)}{\delta\mu_s}\cdot\nabla\frac{\delta F(\mu_s)}{\delta\mu_s},\mu_s\right\rangle\right.\\
&\left.+\frac{1}{2}\left\langle\int_{\R^d}\nabla_x\cdot\nabla_y\frac{\delta^2 G(\mu_s)}{\delta\mu_s^2}\delta_x(dy),\mu_s\right\rangle\right]ds,\quad t\geq 0,
\end{split}
\end{equation}
is a continuous $(\F_t)$-martingale with the quadratic variation
$$
[M^G]_t=\int_0^t\left\langle\left|\nabla\frac{\delta G(\mu_s)}{\delta\mu_s}\right|^2,\mu_s\right\rangle ds,\quad t\geq 0.
$$
\end{theorem}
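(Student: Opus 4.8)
The plan is to reduce the general statement to the case of simple \emph{cylindrical} functionals, for which the formula follows from the classical finite–dimensional It\^o formula, and then to transfer the identity to an arbitrary $G\in\Cf^{2,2}_b(\MF)$ by the Bernstein–type approximation constructed in the appendix. The one structural fact that makes the limiting procedure clean is the conservation of mass from Remark~\ref{rem_conservation}: since $\langle 1,\mu_t\rangle=b$ for all $t$, the process never leaves the fixed bounded set $\NM_b(\R^d)$, so all functionals and their derivatives may be controlled by their bounds there. Consider first $G(\mu)=f(\langle\varphi_1,\mu\rangle,\dots,\langle\varphi_k,\mu\rangle)$ with $f\in\Cf^2(\R^k)$ and $\varphi_1,\dots,\varphi_k\in\Cf^2_b(\R^d)$. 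Writing $\xi_i(t):=\langle\varphi_i,\mu_t\rangle$, the definition of a solution gives $\xi_i(t)=M_{\varphi_i}(t)+\int_0^t b_i(s)\,ds$ with $b_i(s)=\frac{\alpha}{2}\langle\Laplas\varphi_i,\mu_s\rangle+\langle\nabla\varphi_i\cdot\nabla\frac{\delta F(\mu_s)}{\delta\mu_s},\mu_s\rangle$, so each $\xi_i$ is a continuous semimartingale. Polarising the quadratic variation~\eqref{f_variation_of_M_varphi} yields the cross variations $[M_{\varphi_i},M_{\varphi_j}]_t=\int_0^t\langle\nabla\varphi_i\cdot\nabla\varphi_j,\mu_s\rangle\,ds$, and applying the usual It\^o formula to $f(\xi(t))$ produces a martingale part $\sum_i\int_0^t\partial_i f(\xi(s))\,dM_{\varphi_i}(s)$ with quadratic variation $\int_0^t\langle|\sum_i\partial_i f(\xi(s))\nabla\varphi_i|^2,\mu_s\rangle\,ds$.

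It then remains to recognise the drift and the quadratic variation in terms of the functional derivatives of $G$. For cylindrical $G$ one has $\frac{\delta G(\mu)}{\delta\mu}(x)=\sum_i\partial_i f\,\varphi_i(x)$ and $\frac{\delta^2 G(\mu)}{\delta\mu^2}(x,y)=\sum_{i,j}\partial_i\partial_j f\,\varphi_i(x)\varphi_j(y)$. Substituting, the $\frac{\alpha}{2}$– and the $\nabla\frac{\delta F}{\delta\mu}$–terms of the drift become exactly $\frac{\alpha}{2}\langle\Laplas\frac{\delta G}{\delta\mu},\mu_s\rangle$ and $\langle\nabla\frac{\delta G}{\delta\mu}\cdot\nabla\frac{\delta F}{\delta\mu},\mu_s\rangle$, while the second–order It\^o term $\frac12\sum_{i,j}\partial_i\partial_j f\,\langle\nabla\varphi_i\cdot\nabla\varphi_j,\mu_s\rangle$ is precisely $\frac12\langle\int_{\R^d}\nabla_x\cdot\nabla_y\frac{\delta^2 G(\mu_s)}{\delta\mu_s^2}\delta_x(dy),\mu_s\rangle$ once $\nabla_x\cdot\nabla_y(\varphi_i(x)\varphi_j(y))$ is evaluated on the diagonal $y=x$; likewise the quadratic variation equals $\int_0^t\langle|\nabla\frac{\delta G}{\delta\mu}|^2,\mu_s\rangle\,ds$. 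This establishes~\eqref{f_M_G} for cylindrical $G$.

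For a general $G\in\Cf^{2,2}_b(\MF)$ I would invoke the appendix to obtain cylindrical functionals $G_n$ such that $G_n$, $\frac{\delta G_n}{\delta\mu}$ and $\frac{\delta^2 G_n}{\delta\mu^2}$ converge to the corresponding objects for $G$, together with the spatial derivatives appearing in~\eqref{f_M_G}, uniformly on compacts in the space variables and uniformly over $\mu\in\NM_b(\R^d)$, with bounds uniform in $n$. Since $\mu_s\in\NM_b(\R^d)$ for all $s$, dominated convergence with respect to the finite measures $\mu_s$ (pointwise convergence coming from uniform convergence on compacts, domination from the uniform bounds) shows that each integrand of $M^{G_n}$ converges to the corresponding integrand of $M^{G}$, and a second dominated convergence in $s\in[0,t]$ yields $M^{G_n}(t)\to M^{G}(t)$ pointwise. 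The same uniform bounds render $|M^{G_n}(t)|$ dominated by a deterministic constant, so the convergence also holds in $L^2$; passing to the limit in $\mathbb{E}[M^{G_n}(t)\mid\F_r]=M^{G_n}(r)$ shows that $M^{G}$ is a continuous $(\F_t)$–martingale. Treating $(M^{G_n})^2-[M^{G_n}]$ in the same way, with $[M^{G_n}]_t\to[M^{G}]_t$, then identifies the quadratic variation.

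The computational heart of the argument is routine; the real work is the approximation step, i.e.\ the content of the appendix. Because Bernstein polynomials are adapted to a box rather than to $\R^d$, one must construct cylindrical approximants for which \emph{both} functional derivatives, together with their first and second spatial derivatives, converge in the modes required above. I expect this simultaneous convergence of $G_n$, $G_n'$ and $G_n''$ — rather than the mere approximation of $G$ itself — to be the genuine obstacle. Once it is available, mass conservation and the $\Cf^{2,2}_b$–bounds reduce the passage to the limit to the elementary dominated–convergence and uniform–integrability arguments sketched above.
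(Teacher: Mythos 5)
Your proposal is correct and follows essentially the same route as the paper: first the classical It\^o formula for cylindrical functionals $G(\mu)=f(\langle\varphi_1,\mu\rangle,\dots,\langle\varphi_k,\mu\rangle)$ with the drift and quadratic variation rewritten via $\frac{\delta G}{\delta\mu}$ and $\frac{\delta^2 G}{\delta\mu^2}$, then the Bernstein-type approximation of Theorem~\ref{thm_approximation} combined with mass conservation (Remark~\ref{rem_conservation}), uniform boundedness, and dominated convergence to pass to general $G\in\Cf^{2,2}_b(\MF)$. Your final step even spells out the uniform-integrability and $(M^{G_n})^2-[M^{G_n}]$ arguments that the paper leaves implicit, and the appendix indeed supplies exactly the simultaneous convergence of $G_n$, $G_n'$, $G_n''$ (pointwise in $\mu$, in $\Cf^2$ in the space variables, with bounds uniform in $n$) that you identify as the crux.
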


\begin{proof}
We first prove the theorem for a function $G$ of the form
\begin{equation}\label{f_partial_case_of_G}
G(\mu)=f\left(\langle\varphi_1,\mu\rangle,\ldots,\langle\varphi_n,\mu\rangle\right)=f\left(\langle\varphi,\mu\rangle\right),\quad \mu\in\MF,
\end{equation}
where $f\in\Cf^2_b(\R^d)$, \ $\varphi_i$, $i\in\{1,\ldots,n\}$, are smooth functions on $\R^d$ with compact supports and $\langle\varphi,\mu\rangle:=(\langle\varphi_1,\mu\rangle,\ldots,\langle\varphi_n,\mu\rangle)$.  By the It\^{o} formula for real valued semimartingales, we have
\begin{align*}
dG(\mu_t)&=df\left(\langle\varphi_1,\mu_t\rangle,\ldots,\langle\varphi_n,\mu_t\rangle\right)=\frac{\alpha}{2}\sum_{i=1}^n\frac{\partial f}{\partial z_i}(\langle\varphi,\mu_t\rangle)\langle\Laplas\varphi_i,\mu_t\rangle dt\\
&+\sum_{i=1}^n\frac{\partial f}{\partial z_i}(\langle\varphi,\mu_t\rangle)\left\langle\nabla\varphi_i\cdot\nabla\frac{\delta F(\mu_t)}{\delta\mu_t},\mu_t\right\rangle dt\\
&+\frac{1}{2}\sum_{i,j=1}^n\frac{\partial^2 f}{\partial z_i\partial z_j}(\langle\varphi,\mu_t\rangle)\left\langle\nabla\varphi_i\cdot\nabla\varphi_j,\mu_t\right\rangle dt+\sum_{i=1}^n\frac{\partial f}{\partial z_i}(\langle\varphi,\mu_t\rangle)dM_{\varphi_i}(t).
\end{align*}
Next, using the equalities
$$
\frac{\delta G(\mu)}{\delta\mu}(x)=\sum_{i=1}^n\frac{\partial f}{\partial z_i}(\langle\varphi,\mu_t\rangle)\varphi_i(x),\quad \mu\in\MF,\ \ x\in\R^d,
$$
and
$$
\frac{\delta^2 G(\mu)}{\delta\mu^2}(x,y)=\sum_{i,j=1}^n\frac{\partial^2 f}{\partial z_i\partial z_j}(\langle\varphi,\mu_t\rangle)\varphi_i(x)\varphi_j(y),\quad \mu\in\MF,\ \ x,y\in\R^d,
$$
it is easy to see that
\begin{align*}
dG(\mu_t)&=\frac{\alpha}{2}\left\langle\Laplas\frac{\delta G(\mu_t)}{\delta\mu_t},\mu_t\right\rangle dt+\left\langle\nabla\frac{\delta G(\mu_t)}{\delta\mu_t}\cdot\nabla\frac{\delta F(\mu_t)}{\delta\mu_t},\mu_t\right\rangle dt\\
&+\frac{1}{2}\left\langle\int_{\R^d}\nabla_x\cdot\nabla_y\frac{\delta^2 G(\mu_t)}{\delta\mu_t^2}\delta_x(dy),\mu_t\right\rangle dt+\sum_{i=1}^n\frac{\partial f}{\partial z_i}(\langle\varphi,\mu_t\rangle)dM_{\varphi_i}(t).
\end{align*}
Moreover, the quadratic variation of $M^G(t)$, $t\geq 0$, the martingale part of $G(\mu_t)$, $t\geq 0$, is equal to
\begin{align*}
[M^G]_t&=\int_0^t\sum_{i,j=1}^n\frac{\partial f}{\partial z_i}(\langle\varphi,\mu_s\rangle)\frac{\partial f}{\partial z_j}(\langle\varphi,\mu_s\rangle)\left\langle\nabla\varphi_i\cdot\nabla\varphi_j,\mu_s\right\rangle ds\\
&=\int_0^t\left\langle\left|\nabla\frac{\delta G(\mu_s)}{\delta\mu_s}\right|^2,\mu_s\right\rangle ds.
\end{align*}
Thus, the It\^{o} formula holds for any function $G$ given by~\eqref{f_partial_case_of_G}.

Next, by Theorem~\ref{thm_approximation} and Remark~\ref{remark_about_extention}, there exists a sequence $\{G_n\}_{n\geq 1}$ of functions of the form~\eqref{f_partial_case_of_G} such that for all $\mu\in\MF$
$$
G_n(\mu)\to G(\mu),\quad n\to\infty,
$$
$$
\frac{\delta G_n(\mu)}{\delta\mu}\to\frac{\delta G(\mu)}{\delta\mu}\ \ \mbox{in}\ \ C^2(\R^d),\quad n\to\infty,
$$
and
$$
\frac{\delta^2 G_n(\mu)}{\delta\mu^2}\to\frac{\delta^2 G(\mu)}{\delta\mu^2}\ \ \mbox{in}\ \ C^2(\R^{2d}),\quad n\to\infty, 
$$
Moreover, $G_n$, $\frac{\delta G_n(\mu)}{\delta\mu}$ and 
$\frac{\delta^2 G_n(\mu)}{\delta\mu^2}$ and their derivatives (by $x$ and $y$) are uniformly bounded (in $n$) on $\NM_C(\MF(\R^d))$, $\NM_C(\MF(\R^d))\times\R^d$ and  ${\NM_C(\MF(\R^d))\times\R^{2d}}$, respectively. 
This implies that for each $\mu\in\MF$
$$
\left\langle\Laplas\frac{\delta G_n(\mu)}{\delta\mu},\mu\right\rangle\to\left\langle\Laplas\frac{\delta G(\mu)}{\delta\mu},\mu\right\rangle,\quad n\to\infty,
$$
$$
\left\langle\nabla\frac{\delta G_n(\mu)}{\delta\mu}\cdot\nabla\frac{\delta F(\mu)}{\delta\mu},\mu\right\rangle\to\left\langle\nabla\frac{\delta G(\mu)}{\delta\mu}\cdot\nabla\frac{\delta F(\mu)}{\delta\mu},\mu\right\rangle,\quad n\to\infty,
$$
$$
\left\langle\int_{\R^d}\nabla_x\cdot\nabla_y\frac{\delta^2 G_n(\mu)}{\delta\mu^2}\delta_x(dy),\mu\right\rangle\to\left\langle\int_{\R^d}\nabla_x\cdot\nabla_y\frac{\delta^2 G(\mu)}{\delta\mu^2}\delta_x(dy),\mu\right\rangle,\quad n\to\infty,
$$
and
$$
\left\langle\left|\nabla\frac{\delta G_n(\mu)}{\delta\mu}\right|^2,\mu\right\rangle\to \left\langle\left|\nabla\frac{\delta G(\mu)}{\delta\mu}\right|^2,\mu\right\rangle,\quad n\to\infty,
$$
by the dominated convergence theorem.

Using the uniform boundedness of $G_n$ and its derivatives, Remark~\ref{rem_conservation} and the dominated convergence theorem, we obtain that the It\^{o} formula for $G$ is also valid. The theorem is proved.
\end{proof}

\section{Girsanov's transformation and proof of the main result}\label{section_Girsanov_transformation}

We assume that a solution $\mu_t$, $t\geq 0$, to equation~\eqref{f_DK_equation} is a canonical process on the filtered probability space $(\Omega,\F,(\F_t)_{t\geq 0},\p)$, where $\Omega$ is the space of continuous functions from $[0,+\infty)$ to $\MF:=\MF(\R^d)$, $\p$ is the distribution of the process $\mu_t$, $t\geq 0$, $(\F_t)_{t\geq 0}$ is the right-continuous and complete induced filtration generated by $\mu_t$, $t\geq 0$, and $\F=\vee_{t\geq 0}\F_t$. We remark that such a filtration exists by Lemma~7.8~\cite{Kallenberg:2002}.

Now, let $N(t)$, $t\geq 0$, be a continuous nonnegative martingale with ${N(0)=1}$. We consider a new measure on $(\Omega,\F)$ defined as
$$
d\p_N:= N(t)d\p\quad \mbox{on}\ \ \F_t,\ \ t\geq 0,
$$
that is, $\p_N(A)=\int_{A}N(t)d\p$, $A\in\F_t$,
which exists by Lemma~18.18~\cite{Kallenberg:2002}. Next we take any function $G$ from $\Cf^{2,2}_b(\MF)$ and note that 
$$
E^G(t):=e^{M^G(t)-\frac{1}{2}[M^G]_t},\quad t\geq 0,
$$
is a continuous $(\F_t)$-matringale with $E^G(0)=1$, by Novikov's theorem (see Theorem~18.23~\cite{Kallenberg:2002}). Here $M^G$ is given by~\eqref{f_M_G}. So, we can define the measure $\p^G:=\p_{E^G}$ on $(\Omega,\F)$, that is, 
\begin{equation}\label{f_measure_P_G}
d\p^G:= E^G(t)d\p=e^{M^G(t)-\frac{1}{2}[M^G]_t}d\p\quad \mbox{on}\ \ \F_t,\ \ t\geq 0.
\end{equation}

\begin{theorem}[Girsanov's transformation for solutions to the Dean-Kawasaki equation]\label{theorem_Girsanov's_transformation}
Let $G$ be a function from $\Cf^{2,2}_b(\MF)$ and $\p^G$ be defined by~\eqref{f_measure_P_G}. Then the process $\mu_t$, $t\geq 0$, solves the equation
$$
d\mu_t=\frac{\alpha}{2}\Laplas\mu_tdt+\nabla\cdot\left(\mu_t\nabla\frac{\delta (F+G)(\mu_t)}{\delta\mu_t}\right)dt+\nabla\cdot\left(\sqrt{\mu_t}dW_t\right)
$$
on the probability space $(\Omega,\F,\p^G)$. In particular, $\mu_t$, $t\geq 0$, is a solution to the equation
$$
d\mu_t=\frac{\alpha}{2}\Laplas\mu_tdt+\nabla\cdot\left(\sqrt{\mu_t}dW_t\right)
$$
on $(\Omega,\F,\p^G)$, if $G=-F$.
\end{theorem}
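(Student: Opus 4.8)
The plan is to read this as a standard Girsanov change-of-drift statement. By construction $E^G(t)=e^{M^G(t)-\frac12[M^G]_t}$ is the stochastic exponential of the continuous $\p$-martingale $M^G$ supplied by Theorem~\ref{thm_Ito_formula}, so Girsanov's theorem tells us exactly how $\p$-martingales are deformed into $\p^G$-martingales: every continuous $\p$-(local) martingale $X$ becomes, under $\p^G$, the continuous $\p^G$-(local) martingale $X-[X,M^G]$, while brackets and covariations are unchanged because $\p^G\sim\p$ on each $\F_t$. To prove the theorem it therefore suffices to apply this to the defining test-function martingales $M_\varphi$, $\varphi\in\Cf^2_b(\R^d)$, and to check that the corrected process $M_\varphi-[M_\varphi,M^G]$ is precisely the martingale characterising the Dean--Kawasaki equation with drift $F+G$.

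The key computation is the covariation $[M_\varphi,M^G]$, and here I would exploit that the linear functional $G_\varphi(\mu):=\langle\varphi,\mu\rangle$ is itself a cylindrical function of the form~\eqref{f_partial_case_of_G}, with $\frac{\delta G_\varphi(\mu)}{\delta\mu}=\varphi$ and $\frac{\delta^2 G_\varphi(\mu)}{\delta\mu^2}=0$; substituting into~\eqref{f_M_G} shows that $M^{G_\varphi}$ coincides with $M_\varphi$ up to the additive constant $\langle\varphi,\mu_0\rangle$, hence has the same bracket and covariations. Since the map $G\mapsto M^G$ defined by~\eqref{f_M_G} is linear, applying Theorem~\ref{thm_Ito_formula} to $H:=G+G_\varphi\in\Cf^{2,2}_b(\MF)$ gives $M^H=M^G+M_\varphi$ up to a constant, and comparing the bracket formula $[M^H]_t=\int_0^t\langle|\nabla\frac{\delta G(\mu_s)}{\delta\mu_s}+\nabla\varphi|^2,\mu_s\rangle\,ds$ with the expansion $[M^H]=[M^G]+2[M^G,M_\varphi]+[M_\varphi]$ yields, after cancelling the pure $[M^G]$ and $[M_\varphi]$ terms, the polarisation identity
$$
[M_\varphi,M^G]_t=\int_0^t\left\langle\nabla\varphi\cdot\nabla\frac{\delta G(\mu_s)}{\delta\mu_s},\mu_s\right\rangle ds .
$$

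It then remains to assemble the pieces. By Girsanov's theorem (see, e.g.,~\cite{Kallenberg:2002}) the process
$$
\widetilde M_\varphi(t):=M_\varphi(t)-[M_\varphi,M^G]_t=\langle\varphi,\mu_t\rangle-\int_0^t\left[\frac{\alpha}{2}\langle\Laplas\varphi,\mu_s\rangle+\left\langle\nabla\varphi\cdot\nabla\frac{\delta(F+G)(\mu_s)}{\delta\mu_s},\mu_s\right\rangle\right]ds
$$
is a continuous $\p^G$-martingale, where I have merged the two drift integrals using the linearity of the functional derivative together with the covariation just computed; this is exactly the defining martingale of the Dean--Kawasaki equation with drift potential $F+G$. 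Its quadratic variation is unchanged under the equivalent measure change, so $[\widetilde M_\varphi]_t=[M_\varphi]_t=\int_0^t\langle|\nabla\varphi|^2,\mu_s\rangle\,ds$, which is the required form~\eqref{f_variation_of_M_varphi}; that $\widetilde M_\varphi$ is a genuine (not merely local) martingale follows since, by Remark~\ref{rem_conservation}, the total mass is conserved and $\nabla\varphi$ is bounded, so this bracket is bounded on finite time intervals. This proves the first assertion, and the special case is obtained by taking $G:=-F\in\Cf^{2,2}_b(\MF)$, so that $F+G\equiv 0$ and $\mu_t$ solves the driftless equation under $\p^G$. I expect the only delicate point to be the covariation computation; the polarisation via Theorem~\ref{thm_Ito_formula} disposes of it, and the remaining integrability and true-martingale verifications are routine consequences of mass conservation and the boundedness built into $\Cf^{2,2}_b(\MF)$.
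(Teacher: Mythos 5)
Your proposal is correct and follows essentially the same route as the paper: invoke Girsanov's theorem (Kallenberg, Theorem~18.19 and Lemma~18.21) to turn each $M_\varphi$ into the $\p^G$-martingale $M_\varphi-[M_\varphi,M^G]$, and identify the covariation $[M_\varphi,M^G]_t=\int_0^t\langle\nabla\varphi\cdot\nabla\frac{\delta G(\mu_s)}{\delta\mu_s},\mu_s\rangle\,ds$ by polarisation via Theorem~\ref{thm_Ito_formula}. The only difference is presentational: where the paper simply says ``the polarisation equality implies'', you spell out that computation by applying Theorem~\ref{thm_Ito_formula} to $H=G+G_\varphi$ and expanding $[M^H]$, which is exactly the intended argument.
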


\begin{proof}
To prove the statement, we use Girsanov's transformation (see e.g. Theorem~18.19 and Lemma~18.21~\cite{Kallenberg:2002}) and Theorem~\ref{thm_Ito_formula}. So, we take a function $\varphi\in\Cf_b(\R^b)$ and compute the joint quadratic variation $[M_{\varphi},M^G]_t$ using Theorem~\ref{thm_Ito_formula}. The polarisation equality implies
$$
[M_{\varphi},M^G]_t=\int_0^t\left\langle\nabla\varphi\cdot\nabla\frac{\delta G(\mu_s)}{\delta\mu_s},\mu_s\right\rangle ds,\quad t\geq 0.
$$
Thus, by Theorem~18.19 and Lemma~18.21~\cite{Kallenberg:2002}, the process
$$
M_{\varphi}(t)-[M_{\varphi},M^G]_t=\langle\varphi,\mu_t\rangle-\int_0^t\left[\frac{\alpha}{2}\langle\Laplas\varphi,\mu_s\rangle+\left\langle\nabla\varphi\cdot\nabla\frac{\delta (F+G)(\mu_s)}{\delta\mu_s},\mu_s\right\rangle\right]ds
$$
is a continuous $(\F_t)$-martingale on $(\Omega,\F,\p^G)$ with the quadratic variation given by~\eqref{f_variation_of_M_varphi}.
\end{proof}

\begin{proof}[Proof of Theorem~\ref{thm_existence_and_uniqueness}]
We assume that $\mu_t$, $t\geq 0$, is a solution to the Dean-Kawasaki equation. Applying Theorem~\ref{theorem_Girsanov's_transformation} for $G=-F$, we obtain that $\mu_t$, $t\geq 0$, must solve the equation
\begin{equation}\label{f_DK_eq_for_F=0}
d\mu_t=\frac{\alpha}{2}\Laplas\mu_tdt+\nabla\cdot\left(\sqrt{\mu_t}dW_t\right)
\end{equation}
on the space $(\Omega,\F,\p^{-F})$. By a simple computation, it is easy to see that the process $\tilde{\mu}_t:=\frac{1}{b}\mu_{bt}$, $t\geq 0$, is a solution to~\eqref{f_DK_eq_for_F=0} with the parameter $b\alpha$ instead of $\alpha$. Moreover, $\tilde{\mu}_t$, $t\geq 0$, takes values in the space of probability measures on $\R^d$. Hence, by Theorem~1~\cite{Konarovskyi:DK:2018}, $\beta\alpha=n\in\N$ and there exists a family of $\R^d$-valued processes $\tilde{X}^i(t)$, $t\geq 0$, such that
$$
\tilde{\mu}_t=\frac{1}{n}\sum_{i=1}^n\delta_{\tilde{X}^i(t)},
$$ 
with $\tilde{X}^i(t)=x^i+\tilde{w}^i(nt)$, $t\geq 0$, $i\in[n]$, and $\tilde{w}^i(t)$, $t\geq 0$, $i\in[n]$, are standard independent $(\F_t)$-Wiener processes on $\R^d$.
This implies that 
$$
\mu_t=\frac{b}{n}\sum_{i=1}^n\delta_{\tilde{X}^i\left(\frac{t}{b}\right)}=\frac{b}{n}\sum_{i=1}^n\delta_{X^i(t)},
$$
where $X^i(t)=\tilde{X}^i\left(\frac{t}{b}\right)=x^i+\tilde{w}^i\left(\frac{nt}{b}\right)$, $t\geq 0$.

Next, we note that the process $N(t):=-M^{G}(t)+[M^{G}]_t$, $t\geq 0$, is a continuous $(\F_t)$-martingale on $(\Omega,\F,\p^G)$, by Girsanov's transformation. Thus, we can consider the following transformation of measure $\p^G$ given by
$$
d\tilde{\p}:=e^{N(t)-\frac{1}{2}[N]_t}d\p^G=d\p\quad \mbox{on}\ \ \F_t,\ \ t\geq 0.
$$  
Thus, applying Girsanov's theorem to $X^i(t)$, $t\geq 0$, $i\in[n]$, on $(\Omega,\F,\p^G)$, we obtain that
$$
R^i(t):=X^i(t)+[M^G,X^i]_t=X^i(t)+\int_0^t\nabla\frac{\delta F(\mu_s)}{\delta\mu_s}(X^i(s))ds,\quad t\geq 0,
$$
are $\R^d$-valued continuous $(\F_t)$-martingales on $(\Omega,\F,\tilde{\p}=\p)$ for all $i\in[n]$ and $[R^i,R^j]_t=\frac{n}{b}I\I_{\{i=j\}}$, $t\geq 0$, $i,j\in[n]$, where $I$ denotes the identity $d\times d$ matrix.

The uniqueness also trivially follows from Girsanov's transformation.
\end{proof}

\begin{example}
We assume that $V_1,V_2\in\Cf_b^2(\R^d)$, $V_1(x)=V_1(-x)$, $x\in\R^d$, and take
$$
F(\mu):=\frac{1}{2}\int_{\R^d}\int_{\R^d}V_1(x-y)\mu(dx)\mu(dy)+\int_{\R^d}V_2(x)\mu(dx),\quad\mu\in\MF(\R^d).
$$
In this case,
$$
\frac{\delta F(\mu)}{\delta\mu}(x)=\int_{\R^d}V_1(x-y)\mu(dy)+V_2(x),\quad\mu\in\MF(\R^d),\ \ x\in\R^d,
$$
and
$$
\frac{\delta^2 F(\mu)}{\delta\mu^2}(x,y)=V_1(x-y),\quad\mu\in\MF(\R^d),\ \ x,y\in\R^d.
$$
Then the Dean-Kawasaki equation for interacting Brownian particles has a form
$$
d\mu_t=\frac{\alpha}{2}\Laplas\mu_tdt+\nabla\cdot\left(\mu_t\int_{\R^d}\nabla V_1(\cdot-y)\mu_t(dy)\right)dt+\nabla\cdot\left(\mu_t\nabla V_2\right)dt+\nabla\cdot\left(\sqrt{\mu_t}dW_t\right),
$$
where $V_1$ plays a role of a two-body interaction potential between particles and $V_2$ is an external potential (see e.g.~\cite{MR2095422,
Dean:1996,
MR3744636,
doi:10.1063/1.4883520,
0305-4470-33-15-101,
PhysRevE.91.022130,
doi:10.1063/1.478705}). 

Since $F\in\Cf^{2,2}_b(\MF(\R^d))$, the Dean-Kawasaki equation has a (unique in law) solution if and only if $b\alpha=n\in\N$ and $\mu_0=\frac{b}{n}\sum_{i=1}^n\delta_{x^i}$ for some $x^i\in\R^d$, $i\in[n]$, where $b=\mu_0(\R^d)$, by Theorem~\ref{thm_existence_and_uniqueness}. Moreover,
$$
\mu_t=\frac{b}{n}\sum_{i=1}^n\delta_{X^i(t)},\quad t\geq 0,
$$
where the family $X^i(t)$, $t\geq 0$, $i\in[n]$, solves the equation
$$
dX^i(t)=-\frac{b}{n}\sum_{j=1}^n\nabla V_1(X^i(t)-X^j(t))dt+\nabla V_2(X^i(t))dt+\sqrt{\frac{n}{b}}dw^i(t),\quad i\in[n].
$$
\end{example}



\appendix

\section{Approximation of differentiable functions on $\MF(\R^d)$}

\subsection{Approximation of differentiable functions on $\MF([a,b]^d)$}

In this section, we fix $a,b\in\R$, $a<b$, and denote $K:=[a,b]^d$ and $\MF:=\MF([a,b]^d)$, for convenience of notation.  We remark that each function from $\Cf(\MF)$ is bounded on $\NM_C:=\NM_C(K)=\{\mu\in\MF:\ \mu(K)\leq C\}$ for all $C>0$, since $\NM_C$ is compact in $\MF$. 

We are going to introduce an analog of the Weierstrass approximation of functions from $\Cf^{k,m}(\MF)$. For this we use multiplicative Bernstein polynomials on $K=[a,b]^d$ (see e.g.~\cite{Veretennikov:2016}). Let $g\in\Cf(K)$. We set for $n\geq 1$
$$
B_n(g)(x)=\sum_{j_1,\ldots j_d=0}^ng\left(a_{j_1,\ldots,j_d}^n\right)\varphi_{j_1,\ldots,j_d}^n(x),\quad x\in K,
$$
where 
$$
a_{j_1,\ldots,j_d}^n=\left(a+\frac{j_1(b-a)}{n},\ldots,a+\frac{j_d(b-a)}{n}\right)
$$
and
$$
\varphi_{j_1,\ldots,j_d}^n(x)=\frac{1}{(b-a)^d}\prod_{k=1}^dC_n^{j_k}(x_k-a)^{j_k}(b-x_k)^{n-j_k},\quad x\in K.
$$
Here $C_n^i=\frac{n!}{i!(n-i)!}$. 

We will consider $B_n$, $n\geq 1$, as linear operators from $\Cf^m(K)$ to $\Cf^m(K)$.

\begin{proposition}\label{proposition_bernstein polynomials}
Let $m\in\N_0$. Then the family of linear operators $B_n:\Cf^m(K)\to\Cf^m(K)$, $n\in\N$, satisfies the following properties:
\begin{enumerate}
\item[(B1)] $\{B_n\}_{n\geq 1}$ is a family of uniformly bounded linear operators on $\Cf^m(K)$;

\item[(B2)] For each $g\in\Cf^m(K)$ and $l\in\N_0^d$, $|l|\leq m$, 
$$
\frac{\partial^{|l|}}{\partial x^l}B_n(g)\to \frac{\partial^{|l|}}{\partial x^l}g\quad\mbox{in}\ \ \Cf(K),\ \ \mbox{as}\ \ n\to\infty,
$$
that is, $B_n(g)\to g$ in $\Cf^m(K)$, as $n\to\infty$.

\item[(B3)] If $g_k\to g$ in $\Cf^m(K)$, $k\to\infty$, then for each $l\in\N_0^d$, $|l|\leq m$,
$$
\frac{\partial^{|l|}}{\partial x^l}B_n(g_k)\to \frac{\partial^{|l|}}{\partial x^l}g\quad\mbox{in}\ \ \Cf(K),\ \ \mbox{as}\ \ n,k\to\infty,
$$
that is, $B_n(g_k)\to g$ in $\Cf^m(K)$, as $n,k\to\infty$.
\end{enumerate}
\end{proposition}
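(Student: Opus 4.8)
The plan is to reduce the whole statement to the classical one–dimensional Bernstein theory by exploiting the tensor-product structure of $B_n$. Writing $B_n=B_n^1\otimes\cdots\otimes B_n^d$, where $B_n^k$ is the one–dimensional Bernstein operator acting in the $k$-th variable, any mixed derivative $\frac{\partial^{|l|}}{\partial x^l}B_n(g)$ factorises coordinatewise, so it suffices to control each factor separately. The key tool is the classical representation of the derivative of a Bernstein polynomial as a lower-degree Bernstein polynomial applied to difference quotients: in one variable on $[a,b]$ one has, for $0\le r\le n$,
$$
\frac{d^r}{dx^r}B_n(g)(x)=\frac{n!}{(n-r)!}\,(b-a)^{-r}\sum_{j=0}^{n-r}\big(\Delta_{h}^r g\big)\Big(a+\tfrac{j(b-a)}{n}\Big)\,\tilde\varphi^{\,n-r}_{j}(x),
$$
where $h=(b-a)/n$, $\Delta_h^r$ is the $r$-th forward difference with step $h$, and $\tilde\varphi^{\,n-r}_j$ is the Bernstein basis of degree $n-r$. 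The crucial analytic fact is that the scaled difference quotients $D^r_n g:=\frac{n!}{(n-r)!}(b-a)^{-r}\Delta_h^r g(\cdot)$ satisfy $\|D^r_n g\|_{\Cf(K)}\le\|g^{(r)}\|_{\Cf(K)}$ (by the mean-value form of divided differences, since $\frac{n!}{(n-r)!\,n^r}\le 1$) and $D^r_n g\to g^{(r)}$ in $\Cf(K)$ whenever $g\in\Cf^r(K)$, by uniform continuity of $g^{(r)}$ on the compact set $K$.

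For (B1), I would first note that $B_n$ is a positive operator reproducing constants, $B_n(1)=1$, so that $\|B_n g\|_{\Cf(K)}\le\|g\|_{\Cf(K)}$. Combining this contraction property with the derivative representation applied in each coordinate, together with the bound $\|D^r_n g\|_{\Cf(K)}\le\|g^{(r)}\|_{\Cf(K)}$, one obtains $\|\frac{\partial^{|l|}}{\partial x^l}B_n(g)\|_{\Cf(K)}\le\|\frac{\partial^{|l|}}{\partial x^l}g\|_{\Cf(K)}$ for every $l\in\N_0^d$ with $|l|\le m$. Summing over $|l|\le m$ gives $\|B_n g\|_{\Cf^m(K)}\le\|g\|_{\Cf^m(K)}$ uniformly in $n$, which is exactly (B1).

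For (B2), fix $l$ with $|l|\le m$ and decompose
$$
\frac{\partial^{|l|}}{\partial x^l}B_n(g)-\frac{\partial^{|l|}}{\partial x^l}g=\widetilde B_{n}\Big(D^{l}_n g-\tfrac{\partial^{|l|}}{\partial x^l}g\Big)+\Big(\widetilde B_{n}\big(\tfrac{\partial^{|l|}}{\partial x^l}g\big)-\tfrac{\partial^{|l|}}{\partial x^l}g\Big),
$$
where $\widetilde B_n$ denotes the product Bernstein operator of degree $n-l_k$ in the $k$-th variable and $D^l_n$ the corresponding multi-coordinate scaled difference quotient. The first term is bounded in $\Cf(K)$ by $\|D^l_n g-\frac{\partial^{|l|}}{\partial x^l}g\|_{\Cf(K)}\to 0$, using that $\widetilde B_n$ is again a contraction; the second term tends to $0$ by the classical Bernstein convergence theorem applied to the continuous function $\frac{\partial^{|l|}}{\partial x^l}g$. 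This yields $B_n(g)\to g$ in $\Cf^m(K)$.

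Finally, (B3) is a formal consequence of (B1) and (B2): with $C$ the uniform operator bound from (B1),
$$
\|B_n(g_k)-g\|_{\Cf^m(K)}\le\|B_n(g_k-g)\|_{\Cf^m(K)}+\|B_n(g)-g\|_{\Cf^m(K)}\le C\,\|g_k-g\|_{\Cf^m(K)}+\|B_n(g)-g\|_{\Cf^m(K)}.
$$
The first summand is small for large $k$ uniformly in $n$, and the second is small for large $n$, giving joint convergence as $n,k\to\infty$. The main obstacle is the derivative step underlying the introductory paragraph: establishing the difference-quotient representation in the multivariate tensor-product setting and verifying both the contractivity estimate and the uniform convergence $D^l_n g\to\frac{\partial^{|l|}}{\partial x^l}g$. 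Once this representation is in hand, the three properties (B1)--(B3) reduce to bookkeeping with the triangle inequality and the scalar Bernstein theorem.
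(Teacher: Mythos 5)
Your proposal is correct, but it follows a genuinely different and more self-contained route than the paper. The paper's own proof is very short: property (B2) for $K=[0,1]^d$ is quoted from Veretennikov and Veretennikova \cite{Veretennikov:2016} and transferred to $[a,b]^d$ by rescaling; then (B1) is \emph{deduced from} (B2) via the Banach--Steinhaus theorem (for each fixed $g$, convergence in (B2) makes $\{\|B_n(g)\|_{\Cf^m(K)}\}_{n\geq 1}$ bounded, and pointwise boundedness of the operator family on the Banach space $\Cf^m(K)$ yields uniform boundedness); finally (B3) is obtained from (B1) and (B2) exactly as in your last step. You instead prove everything from first principles using the difference-quotient representation of derivatives of Bernstein polynomials, which buys two things the paper's argument does not give: no external reference is needed, and (B1) comes with the sharp constant --- each $B_n$ is a contraction of $\Cf^m(K)$ --- rather than an unspecified bound from the uniform boundedness principle. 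Note this also reverses the logical order: for you (B1) is an ingredient of the proof of (B2), whereas in the paper it is a corollary of (B2). The paper's route, conversely, is much shorter and never needs the derivative representation at all. One caveat when you write out the step you flag as the ``main obstacle'': the operator $\widetilde B_n$ produced by differentiation is \emph{not} literally the classical Bernstein operator of degree $n-l_k$, since it samples at the nodes $a+j(b-a)/n$ rather than $a+j(b-a)/(n-l_k)$, so the ``classical Bernstein convergence theorem'' cannot be invoked verbatim for the second term of your decomposition. This is harmless --- $\widetilde B_n$ is positive, reproduces constants, and its first and second moments converge uniformly to those of the identity, so Korovkin's theorem applies (alternatively, compare with the genuine degree-$(n-l_k)$ operator using the $O(1/n)$ node mismatch and uniform continuity of $\frac{\partial^{|l|}}{\partial x^l}g$) --- but it must be addressed for the argument to be complete.
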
 

\begin{proof}
For $K=[0,1]^d$ Property (B2) was proved in~\cite{Veretennikov:2016}. The general case can be obtained by the rescaling. Next, for each $g\in\Cf^m(K)$ Property (B2) implies the boundedness of $\{\|B_n(g)\|_{\Cf^m(K)}\}_{n\geq 1}$. By the Banach-Steinhaus theorem, we obtain (B1). Property (B3) easily follows from (B1) and (B2).
\end{proof}

Now, we introduce an analog of Bernstein polynomials on $\MF$. We set for each $\mu\in\MF$
$$
\chi_n(\mu):=\sum_{j_1,\ldots,j_d=0}^n\left\langle\varphi_{j_1,\ldots,j_d}^n,\mu\right\rangle\delta_{a_{j_1,\ldots,j_d}^n},\quad n\geq 1,
$$
where $\delta_{c}$ is the point measure at $c\in\R^d$, i.e. $\delta_{c}(A)$ equals 1 if $c\in A$ and 0 otherwise. We also define for every $F\in\Cf(\MF)$
\begin{equation}\label{f_P_n}
P_n(F)(\mu):=F\left(\chi_n(\mu)\right),\quad \mu\in\MF,\ \ n\geq 1.
\end{equation}

Setting 
$$
u_n^F(z):=F\left(\sum_{j_1,\ldots,j_d=0}^nz_{j_1,\ldots,j_d}\delta_{a_{j_1,\ldots,j_d}^n}\right),\quad z\in[0,\infty)^{(n+1)^d},
$$
it is easy to see that $u_n^F\in\Cf\left([0,\infty)^{(n+1)^d}\right)$ and
\begin{equation}\label{f_P_n_U}
P_n(F)(\mu)=u_n^F\left(\left(\left\langle\varphi_{j_1,\ldots,j_d}^n,\mu\right\rangle\right)_{j_1,\ldots,j_d=0}^n\right),\quad \mu\in\MF,\ \ n\geq 1.
\end{equation}

We will denote by $\id$ the identity map on $\MF$, that is, $\id(\mu)=\mu$, $\mu\in\MF$.
\begin{proposition}\label{prop_continuity_of_chi}
For each $n\geq 1$ the map $\chi_n:\MF\to\MF$ is continuous and for each sequence $\{\mu_k\}_{k\geq 1}$ converging $\mu$ in $\MF$ one has $\chi_n(\mu_k)\to\mu$ in $\MF$ as $n,k\to\infty$. Moreover, $\chi_n$ maps $\NM_C$ to $\NM_C$ for all $C>0$ and $n\geq 1$. 
\end{proposition}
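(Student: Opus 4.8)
The plan is to push everything through a single duality identity that turns the measure-valued operator $\chi_n$ into the transpose of the scalar Bernstein operator $B_n$ from Proposition~\ref{proposition_bernstein polynomials}. First I would record that for every $\psi\in\Cf(K)$ and every $\mu\in\MF$,
$$
\langle\psi,\chi_n(\mu)\rangle=\sum_{j_1,\ldots,j_d=0}^n\psi\big(a_{j_1,\ldots,j_d}^n\big)\big\langle\varphi_{j_1,\ldots,j_d}^n,\mu\big\rangle=\langle B_n(\psi),\mu\rangle,
$$
where the middle equality is just the interchange of the finite sum with the integral defining $\langle\,\cdot\,,\mu\rangle$ together with the definition $B_n(\psi)=\sum_{j_1,\ldots,j_d=0}^n\psi(a_{j_1,\ldots,j_d}^n)\varphi_{j_1,\ldots,j_d}^n$. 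Once this is in hand, all three assertions become statements about $B_n$.

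For continuity of $\chi_n$ at fixed $n$: since $B_n(\psi)\in\Cf(K)$, the map $\mu\mapsto\langle B_n(\psi),\mu\rangle=\langle\psi,\chi_n(\mu)\rangle$ is weakly continuous for every $\psi$, and as the weak topology on $\MF$ is by definition the initial topology of the maps $\mu\mapsto\langle\psi,\mu\rangle$, this is precisely continuity of $\chi_n:\MF\to\MF$. For invariance of $\NM_C$: nonnegativity of $\chi_n(\mu)$ is immediate from $\varphi_{j_1,\ldots,j_d}^n\ge 0$, while testing against $\psi\equiv 1$ gives $\chi_n(\mu)(K)=\langle B_n(1),\mu\rangle=\langle 1,\mu\rangle=\mu(K)$, using the partition-of-unity (binomial) identity $B_n(1)=\sum_{j_1,\ldots,j_d=0}^n\varphi_{j_1,\ldots,j_d}^n\equiv 1$. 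Hence $\chi_n$ preserves total mass and maps $\NM_C$ into $\NM_C$ for every $C>0$.

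The only genuinely two-parameter point is the joint limit $\chi_n(\mu_k)\to\mu$ as $n,k\to\infty$. By the initial-topology characterization again it suffices to prove the double limit $\langle\psi,\chi_n(\mu_k)\rangle\to\langle\psi,\mu\rangle$ for each fixed $\psi$. Using the identity I would split
$$
\langle\psi,\chi_n(\mu_k)\rangle-\langle\psi,\mu\rangle=\langle B_n(\psi)-\psi,\mu_k\rangle+\big(\langle\psi,\mu_k\rangle-\langle\psi,\mu\rangle\big).
$$
The second bracket tends to $0$ as $k\to\infty$ by weak convergence $\mu_k\to\mu$. The first term is bounded by $\|B_n(\psi)-\psi\|_{\Cf(K)}\,\sup_k\mu_k(K)$, where $\sup_k\mu_k(K)<\infty$ because $\mu_k(K)=\langle 1,\mu_k\rangle\to\langle 1,\mu\rangle=\mu(K)$, and $\|B_n(\psi)-\psi\|_{\Cf(K)}\to 0$ as $n\to\infty$ by property (B2) with $m=0$. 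Thus the first term vanishes as $n\to\infty$ uniformly in $k$, and a standard $\eps/2$ argument combining the two estimates yields the joint limit.

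I do not expect a serious obstacle here: the entire content is carried by the duality identity, after which continuity and mass preservation are immediate. The only points requiring mild care are, first, securing the uniform mass bound $\sup_k\mu_k(K)<\infty$ so that the Bernstein error $\|B_n(\psi)-\psi\|_{\Cf(K)}$ is controlled uniformly in $k$ for the joint limit, and second, invoking the universal (initial-topology) characterization of the weak topology to pass from convergence of every pairing $\langle\psi,\cdot\rangle$ to convergence in $\MF$, which is legitimate since the double-indexed family $(n,k)\mapsto\chi_n(\mu_k)$ is a net over $\N\times\N$.
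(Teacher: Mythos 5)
Your proof is correct and follows essentially the same route as the paper: the duality identity $\langle g,\chi_n(\mu)\rangle=\langle B_n(g),\mu\rangle$, the partition-of-unity computation $\chi_n(\mu)(K)=\mu(K)$ for the $\NM_C$-invariance, and property (B2) for the joint limit. The only cosmetic difference is that your explicit $\eps/2$ splitting with the uniform mass bound $\sup_k\mu_k(K)<\infty$ is exactly the unpacked version of the paper's one-line appeal to the joint continuity of the pairing $\Cf(K)\times\MF\ni(g,\mu)\mapsto\langle g,\mu\rangle$.
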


\begin{remark}
Since the set $\NM_C=\{\mu\in\MF:\ \mu(K)\leq C\}$ is compact in $\MF$, we have that for each $C>0$ $\chi_n\to\id$ uniformly on $\NM_C$ as $n\to\infty$, by Proposition~\ref{prop_continuity_of_chi}.
\end{remark}

\begin{remark}
Proposition~\ref{prop_continuity_of_chi} implies that $P_n$ is a linear map from $\Cf(\MF)$ to $\Cf(\MF)$. 
\end{remark}

\begin{proof}[Proof of Proposition~\ref{prop_continuity_of_chi}]
The continuity of $\chi_n$ is trivial. We take an arbitrary sequence $\{\mu_k\}_{k\geq 1}$ in $\MF$ which converges to $\mu$ and $g\in\Cf(K)$.
Then by Proposition~\ref{proposition_bernstein polynomials},
\begin{align*}
\langle g,\chi_n(\mu_k)\rangle&=\sum_{j_1,\ldots,j_d=0}^ng\left(a_{j_1,\ldots,j_d}^n\right)\left\langle\varphi_{j_1,\ldots,j_d}^n,\mu_k\right\rangle\\
&=\left\langle\sum_{j_1,\ldots,j_d=0}^ng\left(a_{j_1,\ldots,j_d}^n\right)\varphi_{j_1,\ldots,j_d}^n,\mu_k\right\rangle\\
&=\left\langle B_n(g),\mu_k\right\rangle\to\langle g,\mu\rangle,\quad n,k\to\infty,
\end{align*}
since the map $\Cf(K)\times\MF\ni(g,\mu)\mapsto\langle g,\mu\rangle\in\R$ is continuous.

Due to the equality
$$
\chi_n(\mu)(K)=\sum_{j_1,\ldots,j_d=0}^n\left\langle\varphi_{j_1,\ldots,j_d}^n,\mu\right\rangle=\left\langle\sum_{j_1,\ldots,j_d=0}^n\varphi_{j_1,\ldots,j_d}^n,\mu\right\rangle=\left\langle 1,\mu\right\rangle=\mu(K),
$$
$\chi_n$ maps $\NM_C$ to $\NM_C$.
\end{proof}

\begin{proposition}\label{prop_convergence_in_C_M}
For each $F\in\Cf(\MF)$ and $C>0$ we have that $P_n(F)\to F$ uniformly on $\NM_C$ as $n\to\infty$, that is,
$$
\sup_{\mu\in\NM_C}|P_n(F)(\mu)-F(\mu)|\to 0,\quad n\to\infty.
$$
\end{proposition}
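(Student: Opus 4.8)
The plan is to reduce the statement to the combination of two facts that are already at hand: the uniform continuity of $F$ on the compact set $\NM_C$, and the uniform convergence $\chi_n\to\id$ on $\NM_C$ recorded in the remark following Proposition~\ref{prop_continuity_of_chi}. Since $\NM_C$ is compact in $\MF$ by Lemma~\ref{lemma_NM_C} and $F\in\Cf(\MF)$ is continuous, the restriction of $F$ to $\NM_C$ is uniformly continuous with respect to any metric $d$ inducing the weak topology on $\MF$. This is the only analytic property of $F$ that I need, and it is exactly what makes the uniform convergence of the arguments $\chi_n(\mu)$ transfer to uniform convergence of the values $F(\chi_n(\mu))$.

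First I would fix $\eps>0$ and use the uniform continuity of $F$ on $\NM_C$ to produce $\delta>0$ such that $|F(\mu)-F(\nu)|<\eps$ whenever $\mu,\nu\in\NM_C$ with $d(\mu,\nu)<\delta$. Next I would invoke the containment $\chi_n(\NM_C)\subseteq\NM_C$ from Proposition~\ref{prop_continuity_of_chi}, so that for every $\mu\in\NM_C$ both $\mu$ and $\chi_n(\mu)$ lie in $\NM_C$ and the uniform-continuity estimate may legitimately be applied to the pair $(\mu,\chi_n(\mu))$. By the remark after Proposition~\ref{prop_continuity_of_chi} one has $\sup_{\mu\in\NM_C}d(\chi_n(\mu),\mu)\to 0$ as $n\to\infty$, hence there is $N$ with $\sup_{\mu\in\NM_C}d(\chi_n(\mu),\mu)<\delta$ for all $n\geq N$. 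Combining the two bounds gives, for $n\geq N$ and every $\mu\in\NM_C$, the estimate $|P_n(F)(\mu)-F(\mu)|=|F(\chi_n(\mu))-F(\mu)|<\eps$, which is precisely the asserted uniform convergence.

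I do not expect a serious obstacle once the uniform convergence $\chi_n\to\id$ on $\NM_C$ is granted; the argument is the standard estimate for a uniformly continuous function evaluated along a uniformly convergent family of maps. The one point that genuinely requires care is the containment $\chi_n(\NM_C)\subseteq\NM_C$, which ensures that the argument of $F$ never leaves the compact set on which $F$ is uniformly continuous; this rests on the mass-preservation identity $\chi_n(\mu)(K)=\mu(K)$ proved in Proposition~\ref{prop_continuity_of_chi}. Were one unwilling to quote the remark, the only real work would be to upgrade the pointwise and joint convergence of $\chi_n$ to uniform convergence on the compact $\NM_C$, which follows from a routine subsequence-and-compactness argument, so the substance of the proposition is already contained in the preceding results.
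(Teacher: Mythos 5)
Your proof is correct, but it is organized differently from the paper's. The paper argues by contradiction: if the uniform convergence failed, there would be $\eps>0$ and a sequence $\{\mu_n\}\subset\NM_C$ with $|P_n(F)(\mu_n)-F(\mu_n)|\geq\eps$; compactness of $\NM_C$ (Lemma~\ref{lemma_NM_C}) allows passing to a convergent subsequence $\mu_n\to\mu$, and then the joint-convergence property $\chi_n(\mu_n)\to\mu$ from Proposition~\ref{prop_continuity_of_chi} together with plain continuity of $F$ forces $F(\chi_n(\mu_n))-F(\mu_n)\to 0$, a contradiction. That argument never fixes a metric and never invokes uniform continuity. You instead give a direct $\eps$--$\delta$ proof: Heine--Cantor on the compact metrizable set $\NM_C$ gives uniform continuity of $F$, the mass-preservation identity $\chi_n(\mu)(K)=\mu(K)$ keeps $\chi_n(\mu)$ inside $\NM_C$, and the remark following Proposition~\ref{prop_continuity_of_chi} supplies $\sup_{\mu\in\NM_C}d(\chi_n(\mu),\mu)\to 0$. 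Both routes rest on exactly the same underlying fact (compactness of $\NM_C$); the difference is where the compactness is spent. Your version is more transparent and quantitative once the two uniformities are in hand, and you correctly flag the one delicate point, namely that $\chi_n(\mu)$ must stay in the set where the uniform-continuity modulus applies. The paper's version is shorter and self-contained in the sense that it does not lean on the remark --- which is relevant because the remark is itself stated without proof, and its justification is precisely the subsequence-and-compactness argument the paper deploys directly (and which you sketch as the fallback in your last paragraph). So your proof is not circular, but it does route through an unproven assertion of the paper that, if unwound, reduces to the paper's own argument.
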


\begin{remark}
Proposition~\ref{prop_convergence_in_C_M} yields that for each $F\in\Cf(\MF)$ $P_n(F)\to F$ in $\Cf(\MF)$ as $n\to\infty$.
\end{remark}

\begin{proof}[Proof of Proposition~\ref{prop_convergence_in_C_M}]
We assume that the statement is not true. Then there exist $\eps>0$ and a sequence $\{\mu_n\}_{n\geq 1}$ in $\NM_C$ such that $|P_n(F)(\mu_n)-F(\mu_n)|\geq\eps$ for all $n\geq 1$. Since $\NM_C$ is compact, we may assume that $\mu_n\to\mu$ without loss of generality. But by Proposition~\ref{prop_continuity_of_chi} and the continuity of $F$, we have
$$
P_n(F)(\mu_n)-F(\mu_n)=F(\chi_n(\mu_n))-F(\mu_n)\to F(\mu)-F(\mu)=0,\quad n\to\infty,
$$
which contradicts the assumption.
\end{proof}

We note that the space $\Cf(\NM_C)$ of continuous functions from $\NM_C$ to $\R$ furnished with the uniform norm is a Banach space. It is easy to see that for each $n\geq 1$ the map $P_n$ is a continuous linear operator from $\Cf(\NM_C)$ to $\Cf(\NM_C)$. Indeed, the map $\chi_n$ maps $\NM_C$ to $\NM_C$, by Proposition~\ref{prop_continuity_of_chi}. The continuity trivially follows from the form of $P_n$ (see~\eqref{f_P_n}).

\begin{corollary}
The family $\{P_n\}_{n\geq 1}$ of linear operators on $\Cf(\NM_C)$ is uniformly bounded.
\end{corollary}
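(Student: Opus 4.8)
The plan is to invoke the Banach--Steinhaus theorem, in exactly the same manner as (B1) was deduced from (B2) in the proof of Proposition~\ref{proposition_bernstein polynomials}. Two structural facts are already in place: the space $\Cf(\NM_C)$ equipped with the uniform norm is a Banach space, and each $P_n$ is a bounded linear operator on it (linearity being the content of the remark following Proposition~\ref{prop_continuity_of_chi}, since $F\mapsto F\circ\chi_n$ is manifestly linear in $F$). Hence the uniform boundedness principle is applicable as soon as the family $\{P_n\}_{n\geq 1}$ is shown to be pointwise bounded.

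First I would fix an arbitrary $F\in\Cf(\NM_C)$ and argue that $\sup_{n\geq 1}\|P_n(F)\|<\infty$. This is immediate from Proposition~\ref{prop_convergence_in_C_M}, which asserts that $P_n(F)\to F$ uniformly on $\NM_C$, i.e. $\|P_n(F)-F\|\to 0$ in the Banach space $\Cf(\NM_C)$. Any convergent sequence in a normed space is norm-bounded, so $\{\|P_n(F)\|\}_{n\geq 1}$ is bounded, with a bound depending on $F$. This establishes pointwise boundedness of the operator family.

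With pointwise boundedness on the Banach space $\Cf(\NM_C)$ in hand, the Banach--Steinhaus theorem yields $\sup_{n\geq 1}\|P_n\|_{\mathrm{op}}<\infty$, which is precisely the asserted uniform boundedness of $\{P_n\}_{n\geq 1}$. I do not expect any genuine obstacle in this argument, as it merely transcribes the reasoning already used for the operators $B_n$. The only points meriting a moment's care are that the convergence furnished by Proposition~\ref{prop_convergence_in_C_M} is indeed with respect to the uniform norm on $\NM_C$ (so that the pointwise bound is a bound in the correct normed-space sense), and that $P_n$ genuinely preserves $\Cf(\NM_C)$, which holds because $\chi_n$ maps $\NM_C$ into $\NM_C$ by Proposition~\ref{prop_continuity_of_chi}.
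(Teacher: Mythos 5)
Your proposal is correct and follows exactly the paper's own argument: the paper's proof is the one-line deduction from Proposition~\ref{prop_convergence_in_C_M} (pointwise, in fact uniform, convergence $P_n(F)\to F$ on $\NM_C$, hence pointwise norm-boundedness) combined with the Banach--Steinhaus theorem on the Banach space $\Cf(\NM_C)$. Your write-up merely makes explicit the intermediate steps (linearity of $P_n$, preservation of $\Cf(\NM_C)$ via Proposition~\ref{prop_continuity_of_chi}, boundedness of convergent sequences) that the paper leaves implicit.
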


\begin{proof}
The corollary immediately follows from Proposition~\ref{prop_convergence_in_C_M} and the Banach-Steinhaus theorem.
\end{proof}

\begin{lemma}\label{lemma_properties_of_U}
Let $F\in\Cf^k(\MF)$ for some $k\in\{1,2\}$. Then the function $u_n^F$ belongs to $\Cf^k\left([0,\infty)^{(n+1)^d}\right)$. Moreover,
$$
\frac{\partial}{\partial z_{i_1,\ldots,i_d}}u_n^F(z)=F'\left(\sum_{j_1,\ldots,j_d=0}^nz_{j_1,\ldots,j_d}\delta_{a_{j_1,\ldots,j_d}^n};a_{i_1,\ldots,i_d}^n\right),\quad z\in[0,\infty)^{(n+1)^d},
$$
for all $i_1,\ldots,i_d$ and 
$$
\frac{\partial^2}{\partial z_{j_1,\ldots,j_d}\partial z_{i_1,\ldots,i_d}}u_n^F(z)=F''\left(\sum_{l_1,\ldots,l_d=0}^nz_{l_1,\ldots,l_d}\delta_{a_{l_1,\ldots,l_d}^n};a_{j_1,\ldots,j_d}^n,a_{i_1,\ldots,i_d}^n\right),\quad z\in[0,\infty)^{(n+1)^d},
$$
for all $j_1,\ldots,j_d$, $i_1,\ldots,i_d$, if $k=2$.
\end{lemma}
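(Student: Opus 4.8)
The plan is to exploit that $u_n^F$ is nothing but $F$ restricted to the finite-dimensional family of atomic measures $\mu_z:=\sum_{j_1,\ldots,j_d=0}^n z_{j_1,\ldots,j_d}\,\delta_{a_{j_1,\ldots,j_d}^n}$, and that the map $z\mapsto\mu_z$ is linear and continuous from $[0,\infty)^{(n+1)^d}$ into $\MF$ with the crucial feature that shifting a single coordinate, $z\mapsto z+\eps e_{i_1,\ldots,i_d}$, corresponds exactly to adding the atom $\eps\,\delta_{a_{i_1,\ldots,i_d}^n}$ to $\mu_z$. First I would record that $u_n^F(z)=F(\mu_z)$ is continuous on all of $[0,\infty)^{(n+1)^d}$, since $F\in\Cf(\MF)$ and $z\mapsto\mu_z$ is weakly continuous; this already supplies the continuous extension to the boundary demanded by the definition of $\Cf^k$ on $[0,\infty)^{(n+1)^d}$.

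For the first-order statement I would fix an interior point $z$ and a coordinate multi-index $(i_1,\ldots,i_d)=:i$, and study the one-variable function $g(\eps):=F(\mu_z+\eps\,\delta_{a_i^n})$ on a neighbourhood of $0$ small enough that every coefficient stays positive. By the very definition of $F'$, the right derivative $g'_+(\eps)$ exists at each such $\eps$ and equals $F'(\mu_z+\eps\,\delta_{a_i^n};a_i^n)$; since $F\in\Cf^1(\MF)$, this right derivative is continuous in $\eps$, through weak continuity of $\eps\mapsto\mu_z+\eps\,\delta_{a_i^n}$ and the joint continuity of $F'$. The step that makes everything work is the elementary real-variable fact that a continuous function on an interval possessing a continuous one-sided derivative is in fact $\Cf^1$, with genuine derivative equal to that one-sided derivative. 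Applying it to $g$ yields the two-sided partial $\partial u_n^F/\partial z_i(z)=g'(0)=F'(\mu_z;a_i^n)$, and joint continuity of $F'$ together with weak continuity of $z\mapsto\mu_z$ upgrades this to continuity of the partial in $z$, so $u_n^F\in\Cf^1$.

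For $k=2$ I would iterate, applying the identical argument to the functional $\mu\mapsto F'(\mu;a_i^n)$. Differentiating the identity $\partial u_n^F/\partial z_i(z)=F'(\mu_z;a_i^n)$ in a second coordinate multi-index $(j_1,\ldots,j_d)=:j$ once more amounts to perturbing $\mu_z$ by $\eps\,\delta_{a_j^n}$, and by the definition of the second functional derivative the corresponding right derivative $\tfrac{\partial}{\partial\eps}F'(\mu_z+\eps\,\delta_{a_j^n};a_i^n)|_{\eps=0}$ equals $F''(\mu_z;a_j^n,a_i^n)$. The same one-sided-to-two-sided lemma, now fed with the continuity of $\eps\mapsto F''(\mu_z+\eps\,\delta_{a_j^n};a_i^n)$ guaranteed by $F\in\Cf^2(\MF)$, produces $\partial^2 u_n^F/\partial z_j\partial z_i(z)=F''(\mu_z;a_j^n,a_i^n)$ along with continuity of all second partials in $z$, i.e.\ $u_n^F\in\Cf^2$.

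The only genuine obstacle is the mismatch between the one-sided definitions of the functional derivatives $F'$ and $F''$ (limits as $\eps\to0+$) and the two-sided partial derivatives required to place $u_n^F$ in $\Cf^k$. I expect this to be dispatched entirely by the elementary lemma invoked above, whose sole hypothesis, continuity of the one-sided derivative, is exactly what the joint-continuity clauses in the definitions of $\Cf^1(\MF)$ and $\Cf^2(\MF)$ provide; no quantitative estimate on $F$ is needed.
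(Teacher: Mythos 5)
Your proposal is correct and follows essentially the same route as the paper, whose entire proof is the remark that the claim ``easily follows from the definition of $F'$ and $F''$'': you simply unwind those definitions along the linear embedding $z\mapsto\mu_z=\sum z_{j_1,\ldots,j_d}\delta_{a^n_{j_1,\ldots,j_d}}$. The one genuine subtlety the paper glosses over --- that the functional derivatives are defined only as right-sided limits $\eps\to 0+$, while membership in $\Cf^k$ requires two-sided partials on the interior --- is exactly the point you dispatch with the standard lemma that a continuous function with a continuous one-sided derivative is $\Cf^1$, together with the joint continuity of $F'$ and $F''$ built into the definitions of $\Cf^1(\MF)$ and $\Cf^2(\MF)$, so your write-up is a faithful, completed version of the paper's argument.
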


\begin{proof}
The proof easily follows from the definition of $F'$ and $F''$.
\end{proof}

\begin{proposition}
Let $F\in\Cf^{k,m}(\MF)$ for some $k\in\{1,2\}$ and $m\geq 0$. Then for every $n\geq 1$ $P_n(F)\in\Cf^{k,\infty}$ and for each $\mu\in\MF$, $x,y\in K$
\begin{align*}
P_n'(F)(\mu;x)&=\sum_{j_1,\ldots,j_d=0}^nF'\left(\chi_n(\mu);a_{j_1,\ldots,j_d}^n\right)\varphi_{j_1,\ldots,j_d}^n(x)\\
&=\sum_{j_1,\ldots,j_d=0}^nP_n\left(F'\left(\cdot;a_{j_1,\ldots,j_d}^n\right)\right)(\mu)\varphi_{j_1,\ldots,j_d}^n(x).
\end{align*}
and if $k=2$
\begin{align*}
P_n''(F)(\mu;x,y)&=\sum_{j_1,\ldots,j_d=0}^n\sum_{i_1,\ldots,i_d=0}^nF''\left(\chi_n(\mu);a_{j_1,\ldots,j_d}^n,a_{i_1,\ldots,i_d}^n\right)\varphi_{j_1,\ldots,j_d}^n(x)\varphi_{i_1,\ldots,i_d}^n(y)\\
&=\sum_{j_1,\ldots,j_d=0}^n\sum_{i_1,\ldots,i_d=0}^nP_n\left(F''\left(\cdot;a_{j_1,\ldots,j_d}^n,a_{i_1,\ldots,i_d}^n\right)\right)(\mu)\varphi_{j_1,\ldots,j_d}^n(x)\varphi_{i_1,\ldots,i_d}^n(y).
\end{align*}
\end{proposition}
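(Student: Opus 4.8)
The plan is to exploit the cylindrical representation \eqref{f_P_n_U}, which expresses $P_n(F)(\mu)=u_n^F\left((\langle\varphi_{j_1,\ldots,j_d}^n,\mu\rangle)_{j_1,\ldots,j_d=0}^n\right)$ as a $\Cf^k$-function $u_n^F$ of the finitely many linear functionals $\mu\mapsto\langle\varphi_{j_1,\ldots,j_d}^n,\mu\rangle$. This is exactly the cylindrical form \eqref{f_partial_case_of_G} whose functional derivatives are computed via the chain rule. First I would record the elementary identity $\langle\varphi_{j_1,\ldots,j_d}^n,\mu+\eps\delta_x\rangle=\langle\varphi_{j_1,\ldots,j_d}^n,\mu\rangle+\eps\,\varphi_{j_1,\ldots,j_d}^n(x)$, so that perturbing $\mu$ by $\eps\delta_x$ perturbs the $(j_1,\ldots,j_d)$-th argument of $u_n^F$ by $\eps\,\varphi_{j_1,\ldots,j_d}^n(x)$.

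Differentiating in $\eps$ at $\eps=0$ and applying the chain rule to the finite composition then gives
$$
P_n'(F)(\mu;x)=\sum_{j_1,\ldots,j_d=0}^n\frac{\partial u_n^F}{\partial z_{j_1,\ldots,j_d}}\left((\langle\varphi_{i_1,\ldots,i_d}^n,\mu\rangle)_{i_1,\ldots,i_d=0}^n\right)\varphi_{j_1,\ldots,j_d}^n(x).
$$
Lemma~\ref{lemma_properties_of_U} identifies the partial derivative as $F'(\chi_n(\mu);a_{j_1,\ldots,j_d}^n)$, since $\sum_i\langle\varphi_i^n,\mu\rangle\delta_{a_i^n}=\chi_n(\mu)$ by definition of $\chi_n$; this yields the first displayed formula. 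The second equality is merely the definition \eqref{f_P_n} of $P_n$ applied to the function $\mu\mapsto F'(\mu;a_{j_1,\ldots,j_d}^n)$. The second derivative is handled identically by perturbing with $\eps_1\delta_x+\eps_2\delta_y$, taking the mixed partial $\partial^2/\partial\eps_1\partial\eps_2$ at the origin, and invoking the second part of Lemma~\ref{lemma_properties_of_U}.

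It then remains to verify $P_n(F)\in\Cf^{k,\infty}$. Each Bernstein basis function $\varphi_{j_1,\ldots,j_d}^n$ is a polynomial in $x$, hence lies in $\Cf^\infty(K)$, so the formulas above exhibit $P_n'(F)(\mu;\cdot)$ and $P_n''(F)(\mu;\cdot,\cdot)$ as finite linear combinations of smooth functions whose coefficients are $F'(\chi_n(\mu);a_{j_1,\ldots,j_d}^n)$ and $F''(\chi_n(\mu);a_{j_1,\ldots,j_d}^n,a_{i_1,\ldots,i_d}^n)$. These coefficients are jointly continuous in their arguments because $F\in\Cf^k(\MF)$ and $\chi_n$ is continuous by Proposition~\ref{prop_continuity_of_chi}, while the $x$- and $y$-derivatives of the polynomials $\varphi_j^n$ are again polynomials; the required joint continuity in $(\mu,x)$ and $(\mu,x,y)$ follows. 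I expect the only mild subtlety to be the justification of the one-sided chain rule for $u_n^F$ on the boundary of the orthant $[0,\infty)^{(n+1)^d}$, where some coordinate $\langle\varphi_j^n,\mu\rangle$ may vanish; but since the functional derivative is defined as a right limit $\eps\to 0+$, the nonnegativity of $\varphi_j^n$ keeps the perturbed argument inside the orthant, and $u_n^F\in\Cf^k$ by Lemma~\ref{lemma_properties_of_U} makes this routine rather than a genuine obstacle.
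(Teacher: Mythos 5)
Your proof is correct and follows essentially the same route as the paper: both rest on the cylindrical representation \eqref{f_P_n_U}, the chain rule for the perturbation $\mu+\eps\delta_x$, and the identification of $\partial u_n^F/\partial z_{j_1,\ldots,j_d}$ via Lemma~\ref{lemma_properties_of_U}, followed by the definition \eqref{f_P_n} of $P_n$. Your write-up is in fact more careful than the paper's (which compresses all of this into one displayed computation), notably in verifying $P_n(F)\in\Cf^{k,\infty}$ explicitly and in noting that the one-sided derivative at the boundary of the orthant is unproblematic because the $\varphi_{j_1,\ldots,j_d}^n$ are nonnegative.
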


\begin{proof}
The proposition follows from the definition of the derivatives $F'$, $F''$, equality~\eqref{f_P_n_U} and Lemma~\ref{lemma_properties_of_U}. Indeed,
\begin{align*}
P_n'(F)(\mu;x)&=\sum_{j_1,\ldots,j_d=0}^n\frac{\partial}{\partial z_{j_1,\ldots,j_d}}u_n^F\left(\left(\left\langle\varphi_{j_1,\ldots,j_d}^n,\mu\right\rangle\right)_{j_1,\ldots,j_d=0}^n\right)\varphi_{j_1,\ldots,j_d}^n(x)\\
&=\sum_{j_1,\ldots,j_d=0}^nP_n\left(F'\left(\cdot;a_{j_1,\ldots,j_d}^n\right)\right)(\mu)\varphi_{j_1,\ldots,j_d}^n(x).
\end{align*}
Similarly, one can obtain the equality for $P_n''(F)(\mu;x,y)$.
\end{proof}

\begin{theorem}\label{theorem_approximation_by_bernstein_polynomials}
Let $F\in\Cf^{k,m}(\MF)$ for some $k\in\{1,2\}$ and $m\geq 0$. Then for each $l,\tilde{l}\in\N_0^d$, $|l|+|\tilde{l}|\leq m$ and $C>0$ one has
\begin{equation}\label{f_conv_of_P_n'}
\sup_{x\in K,\ \mu\in\NM_C}\left|\frac{\partial^{|l|}}{\partial x^l} P_n'(F)(\mu;x)-\frac{\partial^{|l|}}{\partial x^l}F'(\mu;x)\right|\to 0,\quad n\to\infty.
\end{equation}
and if $k=2$
\begin{equation}\label{f_conv_of_P_n''}
\sup_{x,y\in K,\ \mu\in\NM_C}\left|\frac{\partial^{|l|}}{\partial x^l}\frac{\partial^{|\tilde{l}|}}{\partial y^{\tilde{l}}} P_n''(F)(\mu;x,y)-\frac{\partial^{|l|}}{\partial x^l}\frac{\partial^{|\tilde{l}|}}{\partial y^{\tilde{l}}}F''(\mu;x,y)\right|\to 0,\quad n\to\infty.
\end{equation}
\end{theorem}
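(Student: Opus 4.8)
The plan is to reduce the statement to the three Bernstein properties (B1)--(B3) of Proposition~\ref{proposition_bernstein polynomials} by exploiting the following structural identity: for fixed $\mu$, the function $x\mapsto P_n'(F)(\mu;x)$ is \emph{exactly} the Bernstein polynomial of $x\mapsto F'(\chi_n(\mu);x)$. Indeed, in the formula of the preceding proposition the measure argument $\chi_n(\mu)$ of $F'$ does not depend on the summation index $(j_1,\dots,j_d)$, so it factors out and
$$
P_n'(F)(\mu;\cdot)=B_n\bigl(F'(\chi_n(\mu);\cdot)\bigr).
$$
Moreover, since $\|\cdot\|_{\Cf^m(K)}$ is the maximum over $|l|\le m$ of the suprema $\sup_{x\in K}|\partial_x^l\,\cdot\,|$, the desired convergence \eqref{f_conv_of_P_n'} is equivalent to the single assertion $\sup_{\mu\in\NM_C}\|P_n'(F)(\mu;\cdot)-F'(\mu;\cdot)\|_{\Cf^m(K)}\to0$. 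Thus everything is governed by how $B_n$ acts on the single function $g=F'(\nu;\cdot)\in\Cf^m(K)$.

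First I would record the auxiliary fact that the map $\nu\mapsto F'(\nu;\cdot)$ is continuous from $\NM_C$ into $\Cf^m(K)$. This is not literally the hypothesis---we are only given joint continuity of the derivatives $\partial_x^l F'(\nu;x)$ in $(\nu,x)$---but it follows by a standard compactness argument: if continuity failed there would be $\nu_k\to\nu$, points $x_k\in K$ and a multi-index $|l|\le m$ with the $l$-th derivatives staying $\eps$-apart; passing to a convergent subsequence $x_k\to x$ (using compactness of $K$) and invoking joint continuity yields a contradiction.

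With this in hand I would argue by contradiction, in the same spirit as the proof of Proposition~\ref{prop_convergence_in_C_M}. Suppose \eqref{f_conv_of_P_n'} fails; then along a subsequence there are $\mu_n\in\NM_C$ with $\|B_n(F'(\chi_n(\mu_n);\cdot))-F'(\mu_n;\cdot)\|_{\Cf^m(K)}\ge\eps$. By compactness of $\NM_C$ (Lemma~\ref{lemma_NM_C}) extract $\mu_n\to\mu$. The double-limit statement of Proposition~\ref{prop_continuity_of_chi}, read along the diagonal, gives $\chi_n(\mu_n)\to\mu$ in $\MF$, so by the continuity just established $g_n:=F'(\chi_n(\mu_n);\cdot)\to F'(\mu;\cdot)$ in $\Cf^m(K)$. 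Property (B3) then yields $B_n(g_n)\to F'(\mu;\cdot)$ in $\Cf^m(K)$, while continuity gives $F'(\mu_n;\cdot)\to F'(\mu;\cdot)$ in $\Cf^m(K)$; subtracting contradicts the lower bound $\eps$. This proves the case $k=1$.

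For $k=2$ I would observe that $K^2=[a,b]^{2d}$ is again a cube, and that the formula for $P_n''(F)(\mu;x,y)$ exhibits $(x,y)\mapsto P_n''(F)(\mu;x,y)$ as precisely the $2d$-dimensional Bernstein polynomial of $(x,y)\mapsto F''(\chi_n(\mu);x,y)\in\Cf^m(K^2)$; hence Proposition~\ref{proposition_bernstein polynomials} applies verbatim with $d$ replaced by $2d$, noting that $\partial_x^l\partial_y^{\tilde l}$ with $|l|+|\tilde l|\le m$ ranges exactly over the derivatives measured by $\|\cdot\|_{\Cf^m(K^2)}$. Repeating the contradiction argument with $F''$ and the continuous map $\nu\mapsto F''(\nu;\cdot,\cdot)\in\Cf^m(K^2)$ gives \eqref{f_conv_of_P_n''}. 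I expect the main obstacle to be the uniformity in $\mu\in\NM_C$: pointwise-in-$\mu$ convergence is immediate from (B2), but obtaining uniformity forces one to combine two genuinely independent double limits---$\chi_n\to\id$ from Proposition~\ref{prop_continuity_of_chi} and $B_n\to\id$ from (B3)---which is precisely why both were stated as $n,k\to\infty$ assertions, and the contradiction-and-compactness device is what merges them cleanly.
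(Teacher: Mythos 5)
Your proposal follows the paper's proof essentially step for step: the key identity $P_n'(F)(\mu;\cdot)=B_n\bigl(F'(\chi_n(\mu);\cdot)\bigr)$, the contradiction-plus-compactness argument on $\NM_C$ (and $K$), the diagonal reading of Proposition~\ref{prop_continuity_of_chi} to get $\chi_n(\mu_n)\to\mu$, property (B3) of Proposition~\ref{proposition_bernstein polynomials}, and the $k=2$ case via Bernstein operators on $K^2=[a,b]^{2d}$. If anything you are slightly more careful than the paper: where the paper asserts the intermediate convergence $F'(\chi_n(\mu_n);\cdot)\to F'(\mu_0;\cdot)$ only in $\Cf(K)$, your explicit auxiliary lemma correctly upgrades it to $\Cf^m(K)$, which is what (B3) actually requires when $m\geq 1$.
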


\begin{proof}
We will prove the theorem similarly as Proposition~\ref{prop_convergence_in_C_M}. We start with~\eqref{f_conv_of_P_n'}. If~\eqref{f_conv_of_P_n'} does not hold, then there exist $\eps>0$ and sequences $\{\mu_n\}_{n\geq 1}\subset\NM_C$, $\{x_n\}_{n\geq 1}\in K$ such that 
\begin{equation}\label{f_P-F}
\left|\frac{\partial^{|l|}}{\partial x^l} P_n'(F)(\mu_n;x_n)-\frac{\partial^{|l|}}{\partial x^l}F'(\mu_n;x_n)\right|\geq\eps
\end{equation}
for all $n\geq 1$. Since $\NM_C$ and $K$ are compact sets, we may assume that $\mu_n\to\mu_0$ and $x_n\to x_0$ as $n\to\infty$, without loss of generality. So, we compute
\begin{align*}
\frac{\partial^{|l|}}{\partial x^l} P_n'(F)(\mu_n;x_n)&=\frac{\partial^{|l|}}{\partial x^l}\sum_{j_1,\ldots,j_d=0}^nF'\left(\chi_n(\mu_n);a_{j_1,\ldots,j_d}^n\right)\varphi_{j_1,\ldots,j_d}^n(x_n)\\
&=\frac{\partial^{|l|}}{\partial x^l}B_n\left(F'\left(\chi_n(\mu_n);\cdot\right)\right)(x_n)
\end{align*}
Since $F'$ is continuous on $\MF\times K$ and $K$ is compact, it is easy to see that $F'(\chi_n(\mu_n);\cdot)\to F'(\mu_0;\cdot)$ in $\Cf(K)$ as $n\to\infty$, using Proposition~\ref{prop_continuity_of_chi}. Thus, by Proposition~\ref{proposition_bernstein polynomials}~(B3),
$$
\frac{\partial^{|l|}}{\partial x^l} P_n'(F)(\mu_n;x_n)=\frac{\partial^{|l|}}{\partial x^l}B_n\left(F'\left(\chi_n(\mu_n);\cdot\right)\right)(x_n)\to \frac{\partial^{|l|}}{\partial x^l}F'\left(\mu_0;x_0\right),\quad n\to\infty,
$$
that contradicts~\eqref{f_P-F}.

The uniform convergence~\eqref{f_conv_of_P_n''} can be proved by the same argument taking into an account that 
\begin{align*}
&\frac{\partial^{|l|}}{\partial x^l} \frac{\partial^{|\tilde{l}|}}{\partial y^{\tilde{l}}}P_n''(F)(\mu;x,y)\\
&=\frac{\partial^{|l|}}{\partial x^l}\frac{\partial^{|\tilde{l}|}}{\partial y^{\tilde{l}}}\sum_{j_1,\ldots,j_d=0}^n\sum_{i_1,\ldots,i_d=0}^nF''\left(\chi_n(\mu_n);a_{j_1,\ldots,j_d}^n,a_{i_1,\ldots,i_d}^n\right)\varphi_{j_1,\ldots,j_d}^n(x)\varphi_{i_1,\ldots,i_d}^n(y)\\
&=\frac{\partial^{|l|}}{\partial x^l}\frac{\partial^{|\tilde{l}|}}{\partial y^{\tilde{l}}}\tilde{B}_n\left(F''\left(\chi_n(\mu_n);\cdot,\cdot\right)\right)(x,y),
\end{align*}
where $\tilde{B}_n$, $n\geq 1$, are the Bernstein polynomials defined for functions from $\Cf(K^2)$.
\end{proof}

\subsection{Approximation of differentiable functions on $\MF(\R^d)$}

We fix a smooth bounded function $\psi:\R^d\to\R$ and define a map from $\MF(\R^d)$ to $\MF(\R^d)$ as follows
$$
\vartheta_{\psi}(\mu)(dx):=\psi(x)\mu(dx).
$$ 

We also assume that $\psi$ has a compact support. Let $K=[a,b]^d$ such that $\supp\psi\subset K$. Then the measure $\vartheta_{\psi}(\mu)$ is supported on $K$ and, consequently, we can consider $\vartheta_{\psi}$ as a map from $\MF(\R^d)$ to $\MF(K)\subset\MF(\R^d)$.

\begin{lemma}\label{lem_cont_of_vartheta}
The map $\vartheta_{\psi}:\MF(\R^d)\to\MF(K)$ is continuous.
\end{lemma}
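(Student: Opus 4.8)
The plan is to verify continuity directly from the definition of the weak topology on $\MF(K)$. Since $K$ is compact we have $\Cf_b(K)=\Cf(K)$, so it suffices to show that whenever $\mu_n\to\mu$ in $\MF(\R^d)$, one has $\langle g,\vartheta_{\psi}(\mu_n)\rangle\to\langle g,\vartheta_{\psi}(\mu)\rangle$ for every $g\in\Cf(K)$. By definition $\langle g,\vartheta_{\psi}(\mu)\rangle=\int_K g(x)\psi(x)\,\mu(dx)$, so the task reduces to turning this pairing against $\vartheta_{\psi}(\mu)$ into a pairing of $\mu$ against a single fixed test function in $\Cf_b(\R^d)$, to which the assumed convergence $\mu_n\to\mu$ applies.

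First I would produce, for a given $g\in\Cf(K)$, a function $h\in\Cf_b(\R^d)$ with $h=g\psi$ on $K$ and $h\equiv 0$ off $\supp\psi$. Granting such an $h$, the crucial observation is that $\psi$ vanishes outside $K$ (indeed $\supp\psi\subset K$), hence
$$
\langle g,\vartheta_{\psi}(\mu)\rangle=\int_K g(x)\psi(x)\,\mu(dx)=\int_{\R^d}h(x)\,\mu(dx)=\langle h,\mu\rangle
$$
for every $\mu\in\MF(\R^d)$. Applying this identity to $\mu_n$ and to $\mu$ and using $\langle h,\mu_n\rangle\to\langle h,\mu\rangle$ (which holds because $h\in\Cf_b(\R^d)$ and $\mu_n\to\mu$) then yields $\langle g,\vartheta_{\psi}(\mu_n)\rangle\to\langle g,\vartheta_{\psi}(\mu)\rangle$; since $g\in\Cf(K)$ was arbitrary, this is exactly $\vartheta_{\psi}(\mu_n)\to\vartheta_{\psi}(\mu)$ in $\MF(K)$.

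The one point requiring care --- the main obstacle --- is the construction of $h$ as a genuine element of $\Cf_b(\R^d)$. The naive extension of $g\psi$ by zero outside $K$ need not be continuous on $\partial K$, since $\psi$ is not assumed to vanish there. I would resolve this either by first extending $g$ to a bounded continuous function $\tilde g$ on all of $\R^d$ via the Tietze extension theorem (legitimate because $K$ is closed) and setting $h:=\tilde g\,\psi$, which is continuous and bounded as a product of such functions and agrees with $g\psi$ on $K$; or, alternatively, by enlarging the rectangle $K$ so that $\supp\psi$ lies in its interior, in which case the zero-extension of $g\psi$ is automatically continuous. Either choice makes $h\in\Cf_b(\R^d)$ and completes the argument, the remainder being the routine verification already indicated.
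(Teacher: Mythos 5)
Your argument is correct and is precisely the direct verification from the definition that the paper itself dismisses as trivial: reduce weak convergence of $\vartheta_{\psi}(\mu_n)$ in $\MF(K)$ to pairing $\mu_n$ against a single fixed test function in $\Cf_b(\R^d)$. The only inaccuracy is your claimed ``main obstacle'': since $\psi$ is continuous with $\supp\psi\subset K$, it automatically vanishes on $\partial K$ (a nonzero value at a boundary point would force $\psi\neq 0$ at nearby points outside $K$, contradicting $\supp\psi\subset K$), so the naive zero-extension of $g\psi$ is already in $\Cf_b(\R^d)$ and the Tietze detour, while harmless, is unnecessary.
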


\begin{proof}
The proof trivially follows from the definition of $\vartheta_{\psi}$.
\end{proof} 

We define for each $F\in\Cf(\MF(K))$ a new function as follows
$$
\Gamma_{\psi}(F)(\mu):=\Gamma_{\psi,K}(F)(\mu):=F(\vartheta_{\psi}(\mu)),\quad \mu\in\MF(\R^d).
$$

\begin{lemma}\label{lemma_derivative_of_Gamma}
If $F\in\Cf^{k,m}(\MF(K))$ for some $k\in\{0,1,2\}$ and $m\geq 0$, then $\Gamma_{\psi}(F)\in\Cf^{k,m}(\MF(\R^d))$. Moreover,
$$
\Gamma_{\psi}'(F)(\mu;x)=\Gamma_{\psi}(F'(\cdot;x))(\mu)\psi(x)=F'(\vartheta_{\psi}(\mu);x)\psi(x),\quad \mu\in\MF(\R^d),\ \ x\in\R^d,
$$
and
$$
\Gamma_{\psi}''(F)(\mu;x)=\Gamma_{\psi}(F''(\cdot;x,y))(\mu)\psi(x)\psi(y)=F''(\vartheta_{\psi}(\mu);x,y)\psi(x)\psi(y),\quad \mu\in\MF(\R^d),\ \ x\in\R^d.
$$
\end{lemma}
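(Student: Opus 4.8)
The plan is to reduce the whole statement to the way $\vartheta_\psi$ transforms Dirac perturbations. First I would record the elementary identity
$$
\vartheta_\psi(\mu+\eps_1\delta_x+\eps_2\delta_y)=\vartheta_\psi(\mu)+\eps_1\psi(x)\delta_x+\eps_2\psi(y)\delta_y,
$$
which is immediate from the definition $\vartheta_\psi(\nu)(dz)=\psi(z)\nu(dz)$ together with the fact that $\psi(z)\delta_x(dz)=\psi(x)\delta_x(dz)$. In words, perturbing $\mu$ by $\eps\delta_x$ and then applying $\vartheta_\psi$ produces the same measure as perturbing $\vartheta_\psi(\mu)$ by the rescaled atom $\eps\psi(x)\delta_x$.

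Next I would feed this identity into the definitions of the functional derivatives. For the first derivative,
$$
\Gamma_\psi'(F)(\mu;x)=\frac{\partial}{\partial\eps}F\bigl(\vartheta_\psi(\mu)+\eps\psi(x)\delta_x\bigr)\Big|_{\eps=0},
$$
and a one-line chain rule---substituting $s=\eps\psi(x)$ when $\psi(x)>0$, and noting that both sides vanish when $\psi(x)=0$---gives $\Gamma_\psi'(F)(\mu;x)=\psi(x)F'(\vartheta_\psi(\mu);x)$, which is the asserted formula. The same computation applied to the mixed second difference in $\eps_1,\eps_2$ produces the prefactor $\psi(x)\psi(y)$ in front of $F''(\vartheta_\psi(\mu);x,y)$. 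Here it is essential that $\psi\ge 0$ (as is forced by the requirement that $\vartheta_\psi$ take values in $\MF(K)$), so that the one-sided limit defining $F'$ is precisely the one that appears; the case $\psi(x)<0$ never arises.

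Finally I would check the membership $\Gamma_\psi(F)\in\Cf^{k,m}(\MF(\R^d))$. Continuity of $\Gamma_\psi(F)$ is just the composition of the continuous map $\vartheta_\psi$ (Lemma~\ref{lem_cont_of_vartheta}) with $F\in\Cf(\MF(K))$. Joint continuity of $\Gamma_\psi'(F)$ in $(\mu,x)$, and of $\Gamma_\psi''(F)$ in $(\mu,x,y)$, follows by composing the jointly continuous maps $F'$ and $F''$ with $\vartheta_\psi$ and multiplying by the continuous factors $\psi(x)$ and $\psi(x)\psi(y)$. For the spatial regularity, since $F'(\eta;\cdot)\in\Cf^m(K)$ and $\psi\in\Cf^\infty$ has compact support, the Leibniz rule expresses $\frac{\partial^{|l|}}{\partial x^l}\bigl[F'(\vartheta_\psi(\mu);x)\psi(x)\bigr]$ as a finite sum of products of spatial derivatives of $F'(\vartheta_\psi(\mu);\cdot)$ with derivatives of $\psi$; each summand is jointly continuous in $(\mu,x)$ by the $\Cf^{1,m}$-property of $F$, so the whole expression lies in $\Cf^m$ and is jointly continuous, and analogously for the $(x,y)$-derivatives of $\Gamma_\psi''(F)$.

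The genuinely substantive point is the chain rule through the one-sided functional derivative, which is why the positivity of $\psi$ matters; the remaining work is routine regularity bookkeeping, the only mild care needed being that $\supp\psi$ lies in the interior of $K$, so that the compactly supported products extend smoothly across $\partial K$.
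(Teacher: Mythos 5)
Your proof is correct and takes essentially the same approach as the paper: the paper's entire argument consists of Lemma~\ref{lem_cont_of_vartheta} for continuity plus the single observation $\Gamma_{\psi}(F)(\mu+\eps\delta_x)=F(\vartheta_{\psi}(\mu)+\eps\psi(x)\delta_x)$, which is exactly your key identity. Your additional remarks (the substitution $s=\eps\psi(x)$ in the one-sided limit, the role of $\psi\geq 0$, and the Leibniz-rule regularity bookkeeping) merely make explicit what the paper leaves implicit.
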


\begin{remark}
We remark that $\psi(x)=0$ for all $x\in K^c:=\R^d\setminus K$, thus, we assume that the multiplication $f(x)\psi(x)=0$, even if $f$ is not defined for such $x$.
\end{remark}

\begin{proof}[Proof of Lemma~\ref{lemma_derivative_of_Gamma}]
The continuity of $\Gamma_{\psi}(F)$ immediately follows from Lemma~\ref{lem_cont_of_vartheta}. The derivatives of $\Gamma_{\psi}(F)$ can be computed using the following observation 
$$
\Gamma_{\psi}(F)(\mu+\eps\delta_x)=F(\vartheta_{\psi}(\mu)+\eps\psi(x)\delta_x),\quad \mu\in\MF(\R^d),\ \ x\in\R^d,\ \ \eps>0.
$$
\end{proof}

\begin{lemma}\label{lemma_conv_of_vartheta}
Let $\{\psi_n\}_{n\geq 1}$ be a sequence of uniformly bounded continuous functions on $\R^d$ which pointwise converges to $\psi\in\Cf_b(\R)$, then $\vartheta_{\psi_n}(\mu)\to\vartheta_{\psi}(\mu)$, $n\to\infty$, for each $\mu\in\MF$.
\end{lemma}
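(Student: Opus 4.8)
The plan is to verify the asserted convergence directly against test functions, which reduces the whole statement to an application of the dominated convergence theorem. Recall that, by the definition of the weak topology on $\MF$, the convergence $\vartheta_{\psi_n}(\mu)\to\vartheta_{\psi}(\mu)$ is equivalent to
$$
\langle\varphi,\vartheta_{\psi_n}(\mu)\rangle\to\langle\varphi,\vartheta_{\psi}(\mu)\rangle,\quad n\to\infty,
$$
for every $\varphi\in\Cf_b(\R^d)$. Using the defining identity $\vartheta_{\psi}(\mu)(dx)=\psi(x)\mu(dx)$, these pairings are nothing but the integrals $\langle\varphi,\vartheta_{\psi_n}(\mu)\rangle=\int_{\R^d}\varphi(x)\psi_n(x)\mu(dx)$ and $\langle\varphi,\vartheta_{\psi}(\mu)\rangle=\int_{\R^d}\varphi(x)\psi(x)\mu(dx)$, so the task is to pass to the limit under the integral sign.

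First I would fix $\varphi\in\Cf_b(\R^d)$ and set $M:=\sup_{n\geq 1}\|\psi_n\|_{\infty}$, which is finite by the uniform boundedness hypothesis. Since $\psi_n\to\psi$ pointwise, the integrands $x\mapsto\varphi(x)\psi_n(x)$ converge pointwise to $\varphi(x)\psi(x)$; moreover they are dominated in absolute value by the constant $\|\varphi\|_{\infty}M$. Because $\mu\in\MF$ is a \emph{finite} measure, this constant is $\mu$-integrable, so the hypotheses of the dominated convergence theorem are met. Applying it gives $\int_{\R^d}\varphi\psi_n\,d\mu\to\int_{\R^d}\varphi\psi\,d\mu$, and since $\varphi$ was arbitrary this is exactly the claimed weak convergence.

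There is no genuine obstacle here: the statement is a direct consequence of dominated convergence. The only point requiring attention is the existence of an integrable dominating function, and this is precisely where the two hypotheses enter in tandem, namely the uniform boundedness of $\{\psi_n\}_{n\geq 1}$ furnishes the constant bound $\|\varphi\|_{\infty}M$, while the finiteness of $\mu$ makes that constant integrable.
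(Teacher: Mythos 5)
Your proof is correct and follows exactly the paper's approach: the paper's own proof is the one-line remark that the lemma ``easily follows from the dominated convergence theorem,'' and your argument simply spells out that application in full, with the uniform bound $\|\varphi\|_{\infty}\sup_{n}\|\psi_n\|_{\infty}$ serving as the integrable dominating function for the finite measure $\mu$.
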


\begin{proof}
The lemma easily follows from the dominated convergence theorem.
\end{proof}

\begin{proposition}\label{proposition_convergence_Gamma}
Let $F\in\Cf^{k,m}(\MF(\R^d))$ for some $k\in\{0,1,2\}$ and $m\geq 0$. Let $\{\psi_n\}_{n\geq 1}$ be a sequence of smooth bounded functions on $\R^d$ such that $\psi_n\to\psi$ in $\Cf^m(\R^d)$, $n\to\infty$, and $\{\psi_n\}_{n\geq 1}$ is uniformly bounded. Then for each $\mu\in\MF(\R^d)$
$$
\Gamma_{\psi_n}(F)(\mu)\to\Gamma_{\psi}(F)(\mu),\quad n\to\infty,
$$
$$
\Gamma_{\psi_n}'(F)(\mu;\cdot)\to\Gamma_{\psi}'(F)(\mu;\cdot)\ \ \mbox{in}\ \ \Cf^m(\R^d),\quad n\to\infty,\quad\mbox{if}\ \ k\geq 1,
$$ 
and
$$
\Gamma_{\psi_n}''(F)(\mu;\cdot)\to\Gamma_{\psi}''(F)(\mu;\cdot)\ \ \mbox{in}\ \ \Cf^m(\R^{2d}),\quad n\to\infty,\quad\mbox{if}\ \ k=2.
$$ 
\end{proposition}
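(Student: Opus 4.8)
The plan is to treat the three assertions in order of increasing derivative, reducing each to the continuity of $F$, $F'$, $F''$ combined with the convergence $\vartheta_{\psi_n}(\mu)\to\vartheta_{\psi}(\mu)$ furnished by Lemma~\ref{lemma_conv_of_vartheta}. Since $\psi_n\to\psi$ in $\Cf^m(\R^d)$ entails pointwise convergence, and together with the assumed uniform boundedness the hypotheses of Lemma~\ref{lemma_conv_of_vartheta} are met, we get $\vartheta_{\psi_n}(\mu)\to\vartheta_{\psi}(\mu)$ in $\MF$. As all the functions $\psi_n,\psi$ are supported in the fixed cube $K=[a,b]^d$ and uniformly bounded by some $M$, the measures $\vartheta_{\psi_n}(\mu)$ and $\vartheta_{\psi}(\mu)$ all lie in the set $\NM_C(K)$ with $C:=M\mu(K)$, which is compact by Lemma~\ref{lemma_NM_C}; this compactness is what drives the whole argument. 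The first assertion is then immediate from continuity of $F$: $\Gamma_{\psi_n}(F)(\mu)=F(\vartheta_{\psi_n}(\mu))\to F(\vartheta_{\psi}(\mu))=\Gamma_{\psi}(F)(\mu)$.

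For the first derivative I would use the formula from Lemma~\ref{lemma_derivative_of_Gamma}, namely $\Gamma_{\psi_n}'(F)(\mu;x)=F'(\vartheta_{\psi_n}(\mu);x)\psi_n(x)$, and split the difference as
\begin{align*}
\Gamma_{\psi_n}'(F)(\mu;x)-\Gamma_{\psi}'(F)(\mu;x)&=\bigl[F'(\vartheta_{\psi_n}(\mu);x)-F'(\vartheta_{\psi}(\mu);x)\bigr]\psi_n(x)\\
&\quad+F'(\vartheta_{\psi}(\mu);x)\bigl[\psi_n(x)-\psi(x)\bigr].
\end{align*}
The second summand tends to $0$ in $\Cf^m(K)$ because $\psi_n-\psi\to 0$ there while $F'(\vartheta_{\psi}(\mu);\cdot)\in\Cf^m(K)$ is a fixed factor, and $\Cf^m(K)$ is an algebra under multiplication on the compact $K$ (Leibniz rule). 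For the first summand the crucial step is to show that $F'(\vartheta_{\psi_n}(\mu);\cdot)\to F'(\vartheta_{\psi}(\mu);\cdot)$ in $\Cf^m(K)$; combined with the uniform $\Cf^m$-bound on $\psi_n$ this again forces the product to $0$ in $\Cf^m(K)$. Since every function involved is supported in $K$, convergence in $\Cf^m(K)$ yields convergence in $\Cf^m(\R^d)$.

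The key step, and the only nontrivial one, is the claimed convergence $F'(\vartheta_{\psi_n}(\mu);\cdot)\to F'(\vartheta_{\psi}(\mu);\cdot)$ in $\Cf^m(K)$, which I would establish exactly as in Proposition~\ref{prop_convergence_in_C_M} and Theorem~\ref{theorem_approximation_by_bernstein_polynomials}. For each multi-index $l$ with $|l|\le m$ the map $(\nu,x)\mapsto\frac{\partial^{|l|}}{\partial x^l}F'(\nu;x)$ is jointly continuous by the very definition of the class $\Cf^{1,m}$; restricted to the compact set $\NM_C(K)\times K$ it is therefore uniformly continuous. If $\sup_{x\in K}\bigl|\frac{\partial^{|l|}}{\partial x^l}F'(\vartheta_{\psi_n}(\mu);x)-\frac{\partial^{|l|}}{\partial x^l}F'(\vartheta_{\psi}(\mu);x)\bigr|$ did not tend to $0$, one could extract $x_n\to x_0$ in $K$ with this quantity bounded below by some $\eps>0$; but joint continuity applied to $(\vartheta_{\psi_n}(\mu),x_n)\to(\vartheta_{\psi}(\mu),x_0)$ and to $(\vartheta_{\psi}(\mu),x_n)\to(\vartheta_{\psi}(\mu),x_0)$ makes the difference vanish, a contradiction. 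This settles the first-derivative assertion.

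For the second derivative the argument is identical in structure: starting from $\Gamma_{\psi_n}''(F)(\mu;x,y)=F''(\vartheta_{\psi_n}(\mu);x,y)\psi_n(x)\psi_n(y)$ I would split off the factor $\psi_n\otimes\psi_n$, note that $\psi_n\otimes\psi_n\to\psi\otimes\psi$ in $\Cf^m(K^2)$ because the tensor product is a bounded bilinear map $\Cf^m(K)\times\Cf^m(K)\to\Cf^m(K^2)$, and prove $F''(\vartheta_{\psi_n}(\mu);\cdot,\cdot)\to F''(\vartheta_{\psi}(\mu);\cdot,\cdot)$ in $\Cf^m(K^2)$ by the same subsequence-and-compactness contradiction, now on $\NM_C(K)\times K^2$ using the joint continuity built into $\Cf^{2,m}$. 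The main obstacle throughout is purely the passage from pointwise to $\Cf^m$-convergence of the derivatives in the measure argument, which is resolved solely by the compactness of $\NM_C(K)$ and of $K$; no genuinely new estimates are required.
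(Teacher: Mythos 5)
Your overall route coincides with the paper's own (much terser) proof---get $\vartheta_{\psi_n}(\mu)\to\vartheta_{\psi}(\mu)$ from Lemma~\ref{lemma_conv_of_vartheta}, use the product formulas of Lemma~\ref{lemma_derivative_of_Gamma}, and reduce the $\Cf^m$-convergence of the derivatives to the joint continuity of $(\nu,x)\mapsto\frac{\partial^{|l|}}{\partial x^l}F'(\nu;x)$ via a compactness/subsequence argument---but your write-up rests on a premise that is not among the hypotheses and that fails precisely where the proposition is used: you assume that ``all the functions $\psi_n,\psi$ are supported in the fixed cube $K=[a,b]^d$''. The proposition only asks that the $\psi_n$ be smooth, bounded, uniformly bounded, and convergent in $\Cf^m(\R^d)$, whose topology is that of uniform convergence on compact sets; no common compact support is assumed. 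In the proof of Theorem~\ref{thm_approximation} the proposition is applied with $\supp\psi_n\subset[-n,n]^d$ growing without bound and with limit $\psi\equiv 1$, which has no compact support at all (and $\vartheta_{\psi}(\mu)=\mu$ need not be compactly supported either). Consequently every step of yours that leans on the fixed cube breaks down there: there is no compact $K$ for which the measures $\vartheta_{\psi_n}(\mu)$, $\vartheta_{\psi}(\mu)$ all lie in $\NM_C(K)$ (Lemma~\ref{lemma_NM_C} requires $K$ compact, and $\NM_C(\R^d)$ is not compact), the uniform continuity on $\NM_C(K)\times K$ is unavailable, and the closing claim that $\Cf^m(K)$-convergence of $K$-supported functions yields $\Cf^m(\R^d)$-convergence has no content.

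The gap is repairable without changing your architecture. Replace $\NM_C(K)$ by the set $Q:=\{\vartheta_{\psi_n}(\mu):\ n\geq 1\}\cup\{\vartheta_{\psi}(\mu)\}$, which is compact in $\MF(\R^d)$ simply because it is a convergent sequence together with its limit (this is exactly what Lemma~\ref{lemma_conv_of_vartheta} provides); and, since convergence in $\Cf^m(\R^d)$ means uniform convergence of all derivatives of order $\leq m$ on every compact set, fix an arbitrary compact $L\subset\R^d$ and run your subsequence/joint-continuity contradiction on $Q\times L$ (respectively $Q\times L^2$ for $F''$). Your two-term decomposition then goes through on $L$ as well: the $\Cf^m$-bounds on $\{\psi_n\}$ that you need are only needed on $L$, where they follow from $\psi_n\to\psi$ in $\Cf^m(\R^d)$ rather than from any global bound. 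One further point you should make explicit: Lemma~\ref{lemma_derivative_of_Gamma} states the formula $\Gamma_{\psi}'(F)(\mu;x)=F'(\vartheta_{\psi}(\mu);x)\psi(x)$ only for compactly supported $\psi$ and $F\in\Cf^{k,m}(\MF(K))$, so for a limit such as $\psi\equiv 1$ you must rederive it; the same one-line computation $\Gamma_{\psi}(F)(\mu+\eps\delta_x)=F(\vartheta_{\psi}(\mu)+\eps\psi(x)\delta_x)$ works verbatim for $F\in\Cf^{k,m}(\MF(\R^d))$ and any bounded smooth $\psi\geq 0$.
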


\begin{proof}
We first note that $F'(\mu_n;\cdot)\to F'(\mu;\cdot)$ in $\Cf^m(\R^d)$ and $F''(\mu_n;\cdot)\to F''(\mu;\cdot)$ in $\Cf^m(\R^{2d})$ as $\mu_n\to\mu$, if $F\in\Cf^{2,m}(\MF(\R^d))$. Thus, the statement immediately follows from lemmas~\ref{lemma_derivative_of_Gamma} and~\ref{lemma_conv_of_vartheta}.
\end{proof}

We denote by $\Cf_P^k(\MF(\R^d))$ the set of functions on $\MF(\R^d)$ of the form
$$
G(\mu)=u(\langle\varphi_1,\mu\rangle,\ldots,\langle\varphi_p,\mu\rangle),\quad \mu\in\MF(\R^d),
$$ 
where $\varphi_i$, $i\in[p]=\{1,\ldots,p\}$, are positive smooth functions with compact supports, ${u\in C^k([0,+\infty)^p)}$ and $p\in\N$. 

\begin{remark}\label{remark_about_extention}
We remark that a function belongs to $\Cf^k([0,+\infty)^p)$ if and only if it can be extended to a function from $\Cf^k(\R^p)$. 
\end{remark}

Let $F_K$ denote the restriction of a function $F$ from $\Cf(\MF(\R^d))$ to $\MF(K)$.

\begin{lemma}\label{lemma_restrict_of_F}
For each $F\in\Cf^{k,m}(\MF(\R^d))$ the function $F_K$ belongs to $\Cf^{k,m}(\MF(K))$ and
$$
F_K'(\mu;x)=F'(\mu;x),\quad \mu\in\MF(K),\ \ x\in K,
$$
$$
F_K''(\mu;x,y)=F'(\mu;x,y),\quad \mu\in\MF(K),\ \ x,y\in K.
$$
\end{lemma}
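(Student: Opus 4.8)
The plan is to exploit the fact that the inclusion $\MF(K)\hookrightarrow\MF(\R^d)$ is continuous, so that both continuity and the functional derivatives of $F$ transfer to $F_K$ simply by restriction. First I would record this elementary continuity: if $\mu_n\to\mu$ in $\MF(K)$, then for any $\varphi\in\Cf_b(\R^d)$ one has $\langle\varphi,\mu_n\rangle=\langle\varphi|_K,\mu_n\rangle\to\langle\varphi|_K,\mu\rangle=\langle\varphi,\mu\rangle$, because $\varphi|_K\in\Cf(K)$ and the measures are supported on $K$. Hence the inclusion map $\iota:\MF(K)\to\MF(\R^d)$ is continuous, and $F_K=F\circ\iota$ is continuous, i.e. $F_K\in\Cf(\MF(K))$.

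Second, for the derivative identities I would observe that for $\mu\in\MF(K)$ and $x\in K$ the perturbed measure $\mu+\eps\delta_x$ again lies in $\MF(K)$ and coincides, as an element of $\MF(\R^d)$, with the corresponding perturbation used to define $F'$. Consequently the difference quotient defining $F_K'(\mu;x)$ is literally identical to the one defining $F'(\mu;x)$, and passing to the limit $\eps\to 0+$ yields $F_K'(\mu;x)=F'(\mu;x)$ for all $\mu\in\MF(K)$, $x\in K$. The same reasoning applied to the two-parameter perturbation $\mu+\eps_1\delta_x+\eps_2\delta_y$ with $x,y\in K$ gives $F_K''(\mu;x,y)=F''(\mu;x,y)$ when $k=2$.

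Third, I would verify membership in the regularity class. Since $F'(\mu;\cdot)$ belongs to $\Cf^m(\R^d)$ for each $\mu$, its restriction to $K$, together with all derivatives in $x$ up to order $m$, extends continuously to $K$, so $F_K'(\mu;\cdot)\in\Cf^m(K)$; joint continuity of $F_K'$ and its $x$-derivatives on $\MF(K)\times K$ is inherited from the joint continuity of the corresponding objects for $F$ on $\MF(\R^d)\times\R^d$ by composing with the continuous inclusion $\iota$ and restricting $x$ to $K$. The identical argument, applied on $\MF(K)\times K^2$, disposes of $F_K''$ in the case $k=2$. Together with the first two steps this gives $F_K\in\Cf^{k,m}(\MF(K))$ with the asserted derivative formulas.

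The argument is essentially bookkeeping, and I expect no genuine analytic obstacle. The one point meriting care, and the conceptual \emph{crux}, is the observation in the second step: perturbing a measure in $\MF(K)$ by $\eps\delta_x$ with $x\in K$ keeps it inside $\MF(K)$, so the intrinsic functional derivative on $\MF(K)$ agrees verbatim with the restriction of the derivative on $\MF(\R^d)$; everything else follows from continuity of the inclusion and the fact that $\Cf^m(\R^d)$-functions restrict to $\Cf^m(K)$-functions.
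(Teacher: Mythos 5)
Your proof is correct, and it takes the only natural route---the paper itself dismisses this lemma with ``the proof is trivial,'' and your argument is precisely that trivial argument spelled out: continuity of the inclusion $\MF(K)\hookrightarrow\MF(\R^d)$, the observation that perturbations $\mu+\eps\delta_x$ with $x\in K$ stay in $\MF(K)$ so the difference quotients coincide verbatim, and restriction of $\Cf^m(\R^d)$-regularity to $K$. Nothing to correct; you have simply made explicit what the authors left implicit.
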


\begin{proof}
The proof of the lemma is trivial.
\end{proof}

\begin{theorem}\label{thm_approximation}
Let $F\in\Cf^{k,m}(\MF(\R^d))$ for some $k\in\{0,1,2\}$ and $m\geq 0$. Then there exists a sequence $\{F_n\}_{n\geq 1}$ from $\Cf_P^k(\MF(\R^d))$ such that for all $\mu\in\MF(\R^d)$
$$
F_n(\mu)\to F(\mu),\quad n\to\infty,
$$
$$
F_n'(\mu;\cdot)\to F'(\mu;\cdot)\ \ \mbox{in}\ \ \Cf^m(\R^d),\quad n\to\infty,\quad\mbox{if}\ \ k\geq 1,
$$ 
and
$$
F''_n(\mu;\cdot)\to F''(\mu;\cdot)\ \ \mbox{in}\ \ \Cf^m(\R^{2d}),\quad n\to\infty,\quad\mbox{if}\ \ k=2.
$$ 

Moreover, if for some $C>0$ the functions $F$, $F'$ and $F''$ and their derivatives are bounded on sets $\NM_C(\MF(\R^d))$, $\NM_C(\MF(\R^d))\times\R^d$ and  $\NM_C(\MF(\R^d))\times\R^{2d}$, respectively, then the sequence $\{F_n\}_{n\geq 1}$ can be chosen with $\{F_n\}_{n\geq 1}$, $\{F_n'\}_{n\geq 1}$, $\{F_n''\}_{n\geq 1}$ and their derivatives \ uniformly \ bounded \ in $n$ \ on \ $\NM_C(\MF(\R^d))$, \ $\NM_C(\MF(\R^d))\times\R^d$ \ and \ ${\NM_C(\MF(\R^d))\times\R^{2d}}$, respectively. 
\end{theorem}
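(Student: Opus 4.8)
The plan is to reduce the noncompact problem on $\MF(\R^d)$ to the compact setting of Subsection~A.1 by a localisation--Bernstein--diagonalisation scheme: first replace $F$ by functionals that factor through measures supported on growing cubes, then approximate each of these by the measure-valued Bernstein polynomials $P_j$, and finally extract a diagonal sequence. Throughout I would separate the two approximation parameters, writing $n$ for the localisation scale and $j$ for the Bernstein degree.

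For the localisation I would fix one bump $\psi\in\Cf^{\infty}(\R^d)$ with $0\le\psi\le 1$, $\psi\equiv 1$ on $[-1,1]^d$ and compact support, and set $\psi_n(x):=\psi(x/n)$, $K_n:=\supp\psi_n$. Then $\psi_n\to 1$ in $\Cf^m(\R^d)$ while $\{\psi_n\}$ stays uniformly bounded in $\Cf^m(\R^d)$, since the derivatives of $\psi_n$ carry negative powers of $n$. Applying Proposition~\ref{proposition_convergence_Gamma} with the limit $\psi=1$ --- for which $\vartheta_{1}=\id$ and hence $\Gamma_{1}(F)=F$ --- the localised functionals $G_n:=\Gamma_{\psi_n}(F)$ satisfy $G_n(\mu)\to F(\mu)$ and, when $k\ge 1$, $G_n'(\mu;\cdot)\to F'(\mu;\cdot)$ in $\Cf^m(\R^d)$ (with the analogue for $G_n''$ when $k=2$). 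Because $\psi_n$ is supported in $K_n$, each $G_n$ factors through $F_{K_n}$, the restriction of $F$ to $\MF(K_n)$, which lies in $\Cf^{k,m}(\MF(K_n))$ by Lemma~\ref{lemma_restrict_of_F}.

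Next, for each fixed $n$ I would invoke the compact-space theory: by Proposition~\ref{prop_convergence_in_C_M} and Theorem~\ref{theorem_approximation_by_bernstein_polynomials} the Bernstein polynomials $P_j(F_{K_n})$ converge to $F_{K_n}$, together with their first and second functional derivatives in the $\Cf^m$-sense, uniformly over $\NM_C(K_n)$ as $j\to\infty$. Pushing these back through the localisation yields $\Gamma_{\psi_n}(P_j(F_{K_n}))$, which is a genuine element of $\Cf_P^k(\MF(\R^d))$: writing $P_j(F_{K_n})(\psi_n\mu)=u_j^{F_{K_n}}\bigl((\langle\varphi_{j_1,\ldots,j_d}\,\psi_n,\mu\rangle)\bigr)$, the test functions $\varphi_{j_1,\ldots,j_d}\,\psi_n$ (degree-$j$ Bernstein basis on $K_n$ times the cutoff) are smooth, nonnegative and compactly supported, and $u_j^{F_{K_n}}$ extends to a $\Cf^k$ function on all of $\R^{(j+1)^d}$ by Remark~\ref{remark_about_extention}. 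Since $\psi_n\mu\in\NM_C(K_n)$ whenever $\mu\in\NM_C$, and $\Gamma_{\psi_n}$ merely multiplies first and second derivatives by $\psi_n$ and $\psi_n\otimes\psi_n$ (Lemma~\ref{lemma_derivative_of_Gamma}), the uniform-on-$\NM_C(K_n)$ convergences of the Bernstein step transfer to $\Gamma_{\psi_n}(P_j(F_{K_n}))\to G_n$ as $j\to\infty$, in value and in the $\Cf^m$-sense for the derivatives. I would then diagonalise: choose $j(n)$ so large that $F_n:=\Gamma_{\psi_n}(P_{j(n)}(F_{K_n}))$ differs from $G_n$ by at most $1/n$ --- in value and in derivatives --- uniformly over $\NM_n$ and over $x,y\in K_n$. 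For a fixed $\mu$ one eventually has $\mu\in\NM_n$, so $|F_n(\mu)-G_n(\mu)|\le 1/n\to 0$, and combined with $G_n\to F$ the triangle inequality gives $F_n(\mu)\to F(\mu)$; the same splitting on any fixed compact in $x$ (resp.\ $(x,y)$) delivers the $\Cf^m$ convergence of $F_n'$ (resp.\ $F_n''$). The final uniform-boundedness claim comes essentially for free: from $P_j(F)(\mu)=F(\chi_j(\mu))$ with $\chi_j:\NM_C\to\NM_C$, the partition-of-unity identity $\sum_{j_1,\ldots,j_d}\varphi_{j_1,\ldots,j_d}=1$, and the uniform operator bounds (B1) of Proposition~\ref{proposition_bernstein polynomials}, one bounds $P_j(F)$, its derivatives and their $x,y$-derivatives on $\NM_C$ in terms of the assumed bounds on $F,F',F''$, uniformly in $j$, and multiplication by the $\Cf^m$-uniformly-bounded $\psi_n$ preserves this.

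I expect the main obstacle to be the bookkeeping in the diagonalisation: the two parameters live on different spaces --- localisation enlarges the base cube $K_n$, while the Bernstein degree $j$ refines within a fixed $K_n$ --- so one must check that the uniform-on-$\NM_C$, $\Cf^m$-in-$x$ estimates of the compact theory remain compatible with the pointwise-in-$\mu$, $\Cf^m(\R^d)$ conclusions on the full space. The delicate point is that a single $j(n)$ must control the error simultaneously for \emph{all} $\mu\in\NM_n$ and all $x,y$ in the exhausting cubes, not merely along a countable dense set of measures; it is precisely the uniformity over $\NM_C\times K$ (resp.\ $\NM_C\times K^2$) built into Theorem~\ref{theorem_approximation_by_bernstein_polynomials} that makes this diagonal extraction legitimate.
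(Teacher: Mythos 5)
Your proposal is correct and follows essentially the same route as the paper's own proof: localise $F$ through the cutoff maps $\Gamma_{\psi_n}$ to growing cubes $K_n$, approximate the restrictions $F_{K_n}$ by the Bernstein-type operators $P_{N_n}$ of the compact theory with error at most $1/n$ uniformly on $\NM_n(K_n)$ (in values and in the $\Cf^m$-norms of first and second derivatives), set $F_n=\Gamma_{\psi_n}\left(P_{N_n}(F_{K_n})\right)$, and conclude by the triangle inequality combined with Proposition~\ref{proposition_convergence_Gamma}. The only differences are cosmetic: the paper uses cutoffs with $\psi_n\equiv 1$ on $[-n+1,n-1]^d$ and $K_n=[-n,n]^d$ (a rectangle, which you need for the Bernstein theory, since $\supp\psi_n$ itself need not be one), and for the final uniform-boundedness claim the $1/n$-closeness of $F_n$ to $\Gamma_{\psi_n}(F)$ on $\NM_n\supset\NM_C$ already yields the bound directly from the assumed bounds on $F$, $F'$, $F''$ and the uniformly bounded derivatives of $\psi_n$, which is cleaner than your appeal to (B1), whose Banach--Steinhaus constant is not obviously uniform over the growing cubes $K_n$.
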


\begin{proof}
We assume that $k=2$. Let $\psi_n$ be a sequence of smooth functions on $\R^d$ such that they take values from $[0,1]$, $\supp\psi_n\subset K_n:=[-n,n]^d$, $\psi_n(x)=1$, $x\in[-n+1,n-1]^d$, and all derivatives are uniformly bounded in $x$ and $n$, i.e. for each $l\in\N_0^d$, the set $\left\{\frac{\partial^{|l|}}{\partial x^l}\psi_n(x),\ \ x\in\R^d,\ \ n\geq 1\right\}$ is bounded. Let us fix a function $F\in\Cf^{k,m}(\MF(\R^d))$. We are going to approximate $F_{K_n}$ by polynomials introduced in the previous section. So, by Proposition~\ref{prop_convergence_in_C_M} and Theorem~\ref{theorem_approximation_by_bernstein_polynomials}, for every $n\geq 1$ there exists a number $N_n\in\N$ such that
$$
\sup_{\mu\in\NM_n({K_n})}\left|F_{K_n}(\mu)-P_{N_n}(F_{K_n})(\mu)\right|\leq\frac{1}{n},
$$
$$
\sup_{\mu\in\NM_n({K_n})}\left\|F_{K_n}'(\mu;\cdot)-P_{N_n}'(F_{K_n})(\mu;\cdot)\right\|_{\Cf^m(K_n)}\leq\frac{1}{n}
$$
and
$$
\sup_{\mu\in\NM_n({K_n})}\left\|F_{K_n}''(\mu;\cdot)-P_{N_n}''(F_{K_n})(\mu;\cdot)\right\|_{\Cf^m(K_n^2)}\leq\frac{1}{n},
$$
where $\NM_n({K_n})$ is defined in Lemma~\ref{lemma_NM_C} with $C=n$ and $K=K_n$, and $P_{N_n}$ is defined by~\eqref{f_P_n} for $K=K_n$.

We set $F_n(\mu):=\Gamma_{\psi_n}\left(P_{N_n}(F_{K_n})\right)(\mu)=P_{N_n}(F_{K_n})\left(\vartheta_{\psi_n}(\mu)\right)$, $\mu\in\MF(\R^d)$. By Lemma~\ref{lemma_derivative_of_Gamma}, $F_n\in\Cf^{k,m}(\MF(\R^d))$. Moreover, it is easy to see that $F_n\in\Cf_P^k(\MF(\R^d))$, by the definition of $P_{N_n}$ and $\Gamma_{\psi_n}$.

Next, we are going to show that $\{F_n\}_{n\geq 1}$ is the sequence which approximates $F$. We fix $\eps>0$, $\mu\in\MF(\R^d)$ and a compact set $K\subset\R^d$. We choose $\tilde{n}\in\N$ such that $\frac{1}{\tilde{n}}<\frac{\eps}{2}$, $K\subset K_{\tilde{n}}$, $\mu(\R^d)\leq\tilde{n}$ and for all $n\geq \tilde{n}$
$$
\left|F(\mu)-\Gamma_{\psi_n}(F)(\mu)\right|<\frac{\eps}{2},
$$
$$
\left\|F'(\mu;\cdot)-\Gamma_{\psi_n}'(F)(\mu;\cdot)\right\|_{C^m(K)}<\frac{1}{\eps}
$$
and
$$
\left\|F''(\mu;\cdot)-\Gamma_{\psi_n}''(F)(\mu;\cdot)\right\|_{C^m(K^2)}<\frac{1}{\eps}.
$$
Such $\tilde{n}$ exists due to Proposition~\ref{proposition_convergence_Gamma}, since $\{\psi_n\}_{n\geq 1}$ converges to the function $\psi=1$ in $\Cf^m(\R^d)$. Let us remark that $\Gamma_{\psi_n}(F)=\Gamma_{\psi_n}(F_{K_n})$, $\Gamma_{\psi_n}'(F)=\Gamma_{\psi_n}'(F_K)$ and $\Gamma_{\psi_n}''(F)=\Gamma_{\psi_n}''(F_K)$, by Lemma~\ref{lemma_restrict_of_F}.
So, now we can estimate for each $n\geq\tilde{n}$
\begin{align*}
\left|F(\mu)-F_n(\mu)\right|&\leq\left|F(\mu)-\Gamma_{\psi_n}(F)(\mu)\right|+\left|\Gamma_{\psi_n}(F_{K_n})(\mu)-\Gamma_{\psi_n}\left(P_{N_n}(F_{K_n})\right)(\mu)\right|\\
&\leq\frac{\eps}{2}+\left|F_{K_n}(\vartheta_{\psi_n}(\mu))-P_{N_n}(F_{K_n})(\vartheta_{\psi_n}(\mu))\right|\leq\frac{\eps}{2}+\frac{1}{n}\leq \eps,
\end{align*}
since $\vartheta_{\psi_n}(\mu)\in\NM_n({K_n})$. Similarly, for each $n\geq\tilde{n}+1$, we have
\begin{align*}
\left\|F'(\mu;\cdot)-F_n'(\mu;\cdot)\right\|_{C^m(K)}&\leq\left\|F'(\mu;\cdot)-\Gamma_{\psi_n}'(F)(\mu;\cdot)\right\|_{C^m(K)}\\
&+\left\|\Gamma_{\psi_n}'(F_{K_n})(\mu;\cdot)-\Gamma_{\psi_n}'\left(P_{N_n}(F_{K_n})\right)(\mu;\cdot)\right\|_{C^m(K)}\\
&\leq\frac{\eps}{2}+\left|F_{K_n}'(\vartheta_{\psi_n}(\mu);\cdot)\psi_n-P_{N_n}'(F_{K_n})(\vartheta_{\psi_n}(\mu;\cdot))\psi_n\right\|_{C^m(K)}\\
&\leq\frac{\eps}{2}+\frac{1}{n}\leq \eps,
\end{align*}
since $\psi_n(x)=1$ on $K$ for all $n\geq \tilde{n}+1$.
Analogously, $\left\|F''(\mu;\cdot)-F_n''(\mu;\cdot)\right\|_{C^m(K^2)}<\eps$ for all $n\geq\tilde{n}+1$. The theorem is proved.
\end{proof}

\providecommand{\bysame}{\leavevmode\hbox to3em{\hrulefill}\thinspace}
\providecommand{\MR}{\relax\ifhmode\unskip\space\fi MR }
\providecommand{\MRhref}[2]{%
  \href{http://www.ams.org/mathscinet-getitem?mr=#1}{#2}
}
\providecommand{\href}[2]{#2}




%
%

\end{document}